 \newtheorem{theorem}{Theorem}[section]
\newtheorem{definition}[theorem]{Definition}
\newtheorem{proposition}[theorem]{Proposition}
\newtheorem{lemma}[theorem]{Lemma}
\newtheorem{corollary}[theorem]{Corollary}
\theoremstyle{definition}
\newtheorem{remark}[theorem]{Remark}
\def\R{\mathbb{R}}
\def\<{\langle}
\def\>{\rangle}
\DeclareMathOperator{\Cl}{Cl}
\DeclareMathOperator{\codim}{codim}
\newcommand{\Conv}[1]{\operatorname{Conv}\left\{{#1}\right\}}
\DeclareMathOperator{\lk}{lk}
\DeclareMathOperator{\Star}{Star}
\DeclarePairedDelimiter{\set}{\{}{\}}
\newcommand\nullset\varnothing
\newcommand{\proofsection}[2]{
\vspace{0.1cm}
\noindent\textbf{{#1}} 
\textit{{#2}}
\vspace{0.1cm}
\par
}
\begin{document}

\title{Triangulations of simplices with vanishing local $h$-polynomial}           
    
\author[A. de Moura, E. Gunther, S. Payne, J. Schuchardt, and A. Stapledon]{Andr\'e de Moura, Elijah Gunther, Sam Payne, Jason Schuchardt, and Alan Stapledon}

\address{Viela do Mato 4 BL A RC Esq, Quinta da Beloura, 2710-695, Sintra, Portugal}
\email{andros.moura@gmail.com}

\address{UPenn Dept of Mathematics, David Rittenhouse Lab, 209 South 33rd Street
Philadelphia, PA 19104}
\email{elijahg@sas.upenn.edu}

\address{UCLA Dept of Mathematics, Math Sciences Building 6363, 520 Portola Plaza, Los Angeles, CA 90095}
\email{jasonsch@g.ucla.edu}

\address{UT Department of Mathematics, 2515 Speedway, PMA 8.100, Austin, TX 78712}
\email{sampayne@utexas.edu}

\address{}
\email{astapldn@gmail.com}

\begin{abstract}
Motivated by connections to intersection homology of toric morphisms, the motivic monodromy conjecture, and a question of Stanley, we study the structure of geometric triangulations of simplices whose local $h$-polynomial vanishes.  As a first step, we identify a class of refinements that preserve the local $h$-polynomial.  In dimensions 2 and 3, we show that all geometric triangulations with vanishing local $h$-polynomial are obtained from one or two simple examples by a sequence of such refinements.  In higher dimensions, we prove some partial results and give further examples.
\end{abstract}
 
\maketitle

\section{Introduction}

Let $\Gamma$ be a triangulation of a simplex $\Delta$ of dimension $d-1$. 
The $h$-polynomial $h(\Gamma; x) = h_0 + h_1 x + \cdots + h_d x^d$ is a common and
convenient way of encoding the number of faces of $\Gamma$ in each dimension. 
It is characterized by the equation
\[
\sum_{i=0}^{d} h_i (x+1)^{d-i} = \sum_{i=0}^d f_{i-1} x^{d-i},
\]
where $f_{-1} = 1$ and $f_i$ is the number of $i$-dimensional faces of $\Gamma$,
for $i \geq 0$.  The coefficients $h_i$ are non-negative integers. 
One powerful tool for studying $h(\Gamma;x)$ is the local $h$-polynomial
$\ell(\Gamma;x) = \ell_0 + \ell_1x + \cdots + \ell_d x^d$,
introduced by Stanley in his seminal paper \cite{Stanley92}. 
It is characterized via M\"obius inversion by the equation
\begin{equation} \label{eq:h=sum-of-l}
h(\Gamma;x) = \sum_{F \leq \Delta} \ell(\Gamma_F;x),
\end{equation}
where $\Gamma_F$ denotes the restriction of the triangulation $\Gamma$ to a face $F$
(which may be empty or all of $\Delta$), together with the condition $\ell(\varnothing; x) = 1$.

The local $h$-polynomial has remarkable properties. 
In particular, the coefficients $\ell_i$ are nonnegative and satisfy $ \ell_i = \ell_{d-i}$.
Moreover, if the subdivision is regular, then these coefficients are unimodal. 
Among other applications, Stanley used local $h$-polynomials to prove that $h$-polynomials
increase coefficientwise under refinement.

As discussed in Section 2, the local $h$-polynomial also behaves predictably with respect 
to basic operations on subdivisions.  It is additive for refinements that nontrivially 
subdivide only one facet, multiplicative for joins, and vanishes on the trivial subdivision. 
In particular, if $\Gamma'$ is a refinement of $\Gamma$ that nontrivially subdivides only
one facet, and that subdivision is the cone over a subdivision of a codimension 1 face, 
then $\ell(\Gamma';x) = \ell(\Gamma;x)$.  We call such subdivisions 
\emph{conical facet refinements}.  

Our aim is to study geometric triangulations with vanishing local $h$-polynomial.
One might hope that all such subdivisions are obtained from the trivial subdivision by a 
sequence of conical facet refinements. 
However, not all triangulations $\Gamma$ with vanishing local $h$-polynomial can be obtained from 
the trivial subdivision in this way.  One notable example
in dimension 2 is the following subdivision, which we call the 
\emph{triforce}\footnote{This name reflects the subdivision's realization in a sacred
golden relic that is the ultimate source of power in the action-adventure video game 
series \emph{The Legend of Zelda}.}.

\medskip

\begin{center}
\begin{tikzpicture}[scale=2]
\draw (0.5,0.8)  -- (1,0) -- (0,0) -- (0.5,0.8);
\draw (0.75,0.4) -- (0.5,0) -- (0.25,0.4) -- (0.75,0.4);
\end{tikzpicture}
\end{center}

All of our theorems are for \emph{geometric triangulations}, and all triangulations in this paper are assumed to be geometric.

\begin{theorem} \label{thm:dim2}
In dimension 2, any triangulation with vanishing local $h$-polynomial is obtained from either the trivial subdivision or the triforce by a sequence of conical facet refinements.
\end{theorem}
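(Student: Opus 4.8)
The plan is to convert the statement into the combinatorics of triangulations of a triangle by straight segments, and then run a deletion induction via reverse conical facet refinements. In dimension $2$ the symmetry $\ell_i=\ell_{d-i}$ together with $\ell_0=0$ gives $\ell(\Gamma;x)=\ell_1(\Gamma)\,(x+x^2)$, and it is standard that $\ell_1(\Gamma)$ equals the number of vertices of $\Gamma$ in the interior of $\Delta$ (Möbius-invert \eqref{eq:h=sum-of-l} using $h_1(\Gamma_F)=f_0(\Gamma_F)-\dim F-1$). So $\ell(\Gamma;x)=0$ if and only if every vertex of $\Gamma$ lies on $\partial\Delta$; such a $\Gamma$ is a geometric triangulation of the convex polygon $P$ whose vertices are the three corners of $\Delta$ together with the (flat) vertices of $\Gamma$ interior to the edges of $\Delta$. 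In this language a one-vertex conical facet refinement adds a vertex $v$ to the relative interior of an edge of $\Delta$ inside a single triangle and cones from its opposite vertex; conversely, $\Gamma$ is a conical facet refinement of a smaller triangulation exactly when $\Gamma$ has a \emph{removable vertex}, meaning a vertex $v$ interior to an edge of $\Delta$ with $\deg v=3$. (Then its two $\partial P$-neighbours lie on that $\Delta$-edge and the third neighbour is the apex; deleting $v$ merges the two triangles at $v$ into one honest triangle, since a geometric triangulation has no degenerate faces.)

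\emph{Induction.} I would induct on the number of vertices, using two facts: (i) if no vertex of $\Gamma$ lies in the interior of an edge of $\Delta$, then $\Gamma$ has only the three corners and is trivial; (ii) if $\Gamma$ has such a vertex but no removable vertex, then $\Gamma$ is the triforce. Granting these, a $\Gamma$ that is neither trivial nor the triforce has a removable vertex $v$, and $\Gamma'=\Gamma\setminus v$ has fewer vertices, no interior vertices, and $\Gamma$ among its conical facet refinements, so we conclude by the inductive hypothesis applied to $\Gamma'$. Fact (i) is immediate; the content is Fact (ii).

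\emph{Proof of (ii).} I would argue via the dual tree $T$ of the polygon triangulation $\Gamma$: its nodes are the $n-2$ triangles and its edges are the $n-3$ diagonals (edges of $\Gamma$ not on $\partial\Delta$), and connectedness together with these counts makes $T$ a tree. The key geometric remark is that a triangle with two boundary edges must have them meeting at a corner of $\Delta$ — otherwise the two edges are collinear and the triangle degenerate — so the leaves of $T$ are precisely the ears at corners, and $T$ has at most three leaves. Now assume $\Gamma$ has no removable vertex, so every edge-interior vertex has degree $\ge 4$, i.e.\ at least two diagonals. Since there are $n-3$ edge-interior vertices and the diagonals contribute exactly $2(n-3)$ to the total diagonal-degree, each edge-interior vertex has exactly two diagonals and each corner has none. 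Hence each corner is an ear, $T$ has exactly three leaves, and $T$ is therefore a subdivision of $K_{1,3}$ — a tripod with one branch node and three arms. One checks (again using that no corner is incident to a diagonal) that every edge of $\Delta$ carries an edge-interior vertex. Finally, if an arm had length $\ge 2$, then analyzing the triangle adjacent to its leaf-ear — which has exactly one boundary edge and two diagonals — forces one of the edge-interior vertices of $\Gamma$ adjacent to the corresponding corner to have only one diagonal, contradicting the previous sentence. So all three arms have length $1$, $n=6$, and tracing the structure forces exactly one vertex of $\Gamma$ on each edge of $\Delta$, joined so that $\Gamma$ is the triforce.

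I expect the main obstacle to be Fact (ii), and within it the arm-length step: one must track precisely which edges of each triangle are boundary versus diagonal, use the flatness of edge-interior vertices repeatedly, and carefully dispatch the small degenerate configurations (consecutive corners, a $\Delta$-edge with a single interior vertex). The structural insight that makes everything go — recognizing $\Gamma$ as a triangulation of a convex polygon with flat side-vertices, whose dual tree has at most three leaves — is the crux.
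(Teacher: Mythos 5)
Your argument is correct, and it takes a genuinely different route from the paper. The paper analyzes the internal edge graph $\Sigma(\Gamma)$ (the graph of diagonals), using Proposition~\ref{prop:components are trees or have cycles} — itself established via the geometric projection arguments of Proposition~\ref{prop:each component has a low excess or low chi} — and then cuts $\Delta$ into smaller triangles (along an interior edge through a corner in Case~1, or along the triforce's three interior edges in Case~2), invoking Lemma~\ref{coarsening-condition} and inducting on the pieces. You instead reinterpret $\Gamma$ as a triangulation of a convex polygon with flat boundary vertices, pass to the \emph{dual} tree $T$ of triangles, and run a double count of diagonal-degrees: if no boundary-edge vertex has degree~$3$, the $n-3$ such vertices absorb all $2(n-3)$ diagonal-endpoints, so corners carry no diagonals, $T$ has exactly three leaves (corner ears), and the arm-length analysis collapses $T$ to $K_{1,3}$, i.e.\ the triforce. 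Your induction is single-vertex deletion rather than a cut into triangular pieces, and it makes the inverse of a conical facet refinement concrete (delete a degree-$3$ vertex interior to a $\Delta$-edge). What this buys you is an elementary, self-contained dimension-$2$ proof that sidesteps the machinery of Section~3 entirely; what the paper's route buys is that the same framework carries over to prove Theorem~\ref{thm:dim3}. Two small points worth spelling out in a full write-up: a conical facet refinement can insert several vertices at once, so you should observe that it factors into one-vertex conical facet refinements, making the vertex-deletion induction legitimate; and the ``one checks'' step (every edge of $\Delta$ carries a vertex of $\Gamma$ in its relative interior) should be written out — the quick argument is that two adjacent corners would force one corner to be an endpoint of a diagonal, contradicting the count.
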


\noindent The two cases in the theorem are distinguished by the \emph{internal edge graph}, i.e., the union of the edges that meet the interior of the subdivided simplex, which also figures prominently in the proof.  For an iterated conical facet refinement of the triforce subdivision, the internal edge graph has Euler characteristic zero and contains no vertices of the original triangle.  For an iterated conical facet refinement of the trivial subdivision in dimension 2, the internal edge graph is a tree that contains exactly one of the vertices of the original triangle.

In dimension three, the structural classification is even simpler.

\begin{theorem} \label{thm:dim3}
In dimension 3, any triangulation with vanishing local $h$-polynomial is obtained from the trivial subdivision  by a sequence of conical facet refinements.
\end{theorem}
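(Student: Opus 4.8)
The plan is to leverage the dimension-2 result (Theorem \ref{thm:dim2}) by analyzing how the triangulation $\Gamma$ restricts to the triangular faces of the tetrahedron $\Delta$ and to the link structure around interior vertices and edges. First I would set up the additivity formula \eqref{eq:h=sum-of-l}: since $\ell(\Gamma;x) = 0$ and the local $h$-polynomial is always coefficientwise nonnegative, the identity $h(\Gamma;x) = \sum_{F \leq \Delta} \ell(\Gamma_F;x)$ forces strong constraints. Applying Theorem \ref{thm:dim2} to each of the four $2$-dimensional faces, each $\Gamma_F$ is an iterated conical facet refinement of either the trivial subdivision or the triforce. The key subclaim I would aim for is that the triforce cannot actually occur on any facet of a $3$-dimensional $\Gamma$ with vanishing local $h$: the triforce has $\ell = x + x^2 \neq 0$... wait, more precisely, I would show that the presence of a triforce-type facet would contribute positively to $h(\Gamma;x)$ in a way that cannot be cancelled, because a facet $F$ carrying a triforce forces the interior structure of $\Gamma$ near $F$ to behave badly. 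Concretely, the internal edge graph of a triforce-refinement contains no vertices of the original triangle and has Euler characteristic zero, so if such an $F$ sits inside $\Delta$, I would track the link of the central vertices and derive a contradiction with $\ell(\Gamma;x)=0$ using the multiplicativity of local $h$ under joins and the behavior under taking stars and links (as in Section 2).

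Next, assuming every facet $\Gamma_F$ is an iterated conical facet refinement of the trivial subdivision, I would argue by induction on the number of interior vertices and interior edges of $\Gamma$. The base case is when $\Gamma$ has no interior vertices and all facet restrictions are trivial, i.e.\ $\Gamma$ is the trivial subdivision. For the inductive step, I want to find an interior vertex $v$ (or, failing that, an interior edge) whose star is a cone over its link, so that removing it — i.e.\ reversing a conical facet refinement — produces a triangulation $\Gamma'$ with $\ell(\Gamma';x) = \ell(\Gamma;x) = 0$ and strictly fewer interior faces, to which the inductive hypothesis applies. To locate such a $v$: since the facet restrictions are all tree-like (the internal edge graph of each facet is a tree containing exactly one original vertex), I would trace these trees into the interior. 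A leaf of the internal edge graph on a facet $F$, not at an original vertex, must be an endpoint of an interior edge of $\Gamma$; following a maximal such chain into the interior should terminate at a vertex whose link is a disk subdivided as a cone, which is exactly the reversibility condition for a conical facet refinement.

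The main obstacle I expect is the step showing the triforce does not occur, together with the combinatorial bookkeeping needed to guarantee that a ``reversible'' interior vertex always exists. The first difficulty is essentially a local rigidity statement: one must rule out any triforce-like configuration on a boundary facet by propagating it inward, which requires understanding how $\ell$-vanishing interacts with the two-dimensional links of edges and the three-dimensional link structure — this is where the geometric (as opposed to merely combinatorial) hypothesis is likely essential, since it controls how interior edges can meet the boundary. The second difficulty — finding the vertex to strip off — is a discrete-Morse-type argument: I would order the interior vertices by, say, proximity to a chosen original vertex of $\Delta$ along the internal edge trees, and argue that an extremal such vertex must have conical star, using that its link is a subdivided polygon whose own local $h$ must vanish (by the join/link formulas from Section 2) and hence, being two-dimensional, is itself classified by Theorem \ref{thm:dim2}. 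Iterating, each strip-off decreases a well-founded complexity measure, so the process terminates at the trivial subdivision, which is exactly the claim.
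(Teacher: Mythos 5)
Your proposal hinges on a claim that is both misstated and false: you write that the triforce has $\ell = x + x^2 \neq 0$, but the triforce is precisely one of the two dimension-$2$ triangulations with $\ell = 0$ (that is the entire content of Theorem~\ref{thm:dim2}). More importantly, the key subclaim that a triforce-type restriction cannot occur on a $2$-face of $\Delta$ is simply wrong: the cone over the triforce is a triangulation of the tetrahedron with vanishing local $h$-polynomial, obtained from the trivial subdivision by a single conical facet refinement, and one of its four $2$-faces \emph{is} the triforce. The paper's proof does not rule this configuration out; rather, Subcase~1.2 of its proof explicitly handles the case where $\Gamma_F$ contains a $3$-cycle of interior edges, showing that this forces a cone-over-triforce decomposition of $\Delta$ into four tetrahedral regions, each with vanishing local $h$ by Lemma~\ref{coarsening-condition}, to which induction applies. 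So the first half of your strategy would, if pursued, run into a nonexistent contradiction.

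The second half also has a structural problem. You propose to find an interior \emph{vertex} $v$ whose star is a cone and to ``strip it off.'' But $\ell(\Gamma;x)=0$ implies $\ell_1 = 0$, i.e.\ $\Gamma$ has no interior vertices at all, so there is nothing of this kind to locate; the relevant object is the internal \emph{edge} graph. Moreover, a conical facet refinement is not the operation of coning over a single vertex's link: it replaces one facet by the cone, from one of that facet's own vertices, over a subdivision of the opposite codimension-$1$ face, so reversing it requires identifying a subcomplex whose support is a simplex and whose relative boundary is trivially subdivided (Lemma~\ref{coarsening-condition}), not an interior vertex with conical star. The paper's actual inductive engine is to locate exactly such a subcomplex, using Proposition~\ref{prop:components are trees or have cycles} (each component of the internal edge graph is a tree with a unique low-excess root) and a rather delicate analysis of a leaf of a tree component at maximal distance from the root, together with the link of that leaf inside the carrying $2$-face. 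Your high-level intuition — induct, and undo a conical facet refinement — matches the paper's, but the two load-bearing steps you outline (eliminating the triforce on facets, and finding a conical interior vertex) both fail, and the machinery that replaces them in the paper (the internal edge graph and Lemma~\ref{coarsening-condition}) is absent from your sketch.
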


\noindent The proof again relies on an analysis of the internal edge graph, which in dimension three is a union of trees, each of which contains exactly one vertex supported on a face of codimension at least 2 in the original simplex.

This structure of the internal edge graph is similar in higher dimensions.  Note that $\ell_0 = 0$ for any triangulation of a nonempty simplex.

\begin{theorem}  \label{thm:internaledgegraph}
Let $\Gamma$ be a triangulation of a simplex of dimension at least 3 such that $\ell_1 = \ell_2 = 0$.  Then the internal edge graph of $\Gamma$ is a union of trees each of which contains exactly one vertex supported on a face of codimension at least 2.
\end{theorem}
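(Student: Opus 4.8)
The plan is to read off the coefficients $\ell_1$ and $\ell_2$ of $\ell(\Gamma;x)$ as explicit counts of faces of $\Gamma$, convert the vanishing hypotheses into a single numerical identity involving the internal edge graph, and then reduce the theorem to one structural claim about the connected components of that graph.

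Write $d - 1 = \dim \Delta \geq 3$. A short M\"obius inversion computation starting from $\ell(\Gamma;x) = \sum_{S \subseteq [d]} (-1)^{d - |S|} h(\Gamma_{\Delta_S};x)$ (each face of $\Gamma$ contributes to the alternating sum only through the faces of $\Delta$ that contain it, and the purely numerical terms cancel once $d \geq 4$) yields
\[
\ell_1(\Gamma) = v_{\mathrm{int}}, \qquad \ell_2(\Gamma) = e_{\mathrm{int}} - (d-1)\,v_{\mathrm{int}} - v_{\mathrm{fac}},
\]
where $v_{\mathrm{int}}$ counts the vertices of $\Gamma$ in the interior of $\Delta$, where $e_{\mathrm{int}}$ counts the edges of $\Gamma$ meeting the interior of $\Delta$, and where $v_{\mathrm{fac}}$ counts the vertices of $\Gamma$ lying in the relative interior of a facet of $\Delta$. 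Hence $\ell_1 = 0$ forces $v_{\mathrm{int}} = 0$, so every vertex of $\Gamma$ lies on $\partial \Delta$, and then $\ell_2 = 0$ forces $e_{\mathrm{int}} = v_{\mathrm{fac}}$.

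Next I would analyze the internal edge graph $G$. It has exactly $e_{\mathrm{int}}$ edges, and its vertices are those vertices of $\Gamma$ that lie on such an edge; since $v_{\mathrm{int}} = 0$, each is supported either on a facet of $\Delta$ or on a face of codimension at least $2$. In fact every vertex $w$ of $\Gamma$ in the relative interior of a facet $\tau$ is a vertex of $G$: pick a maximal simplex $\sigma \ni w$, so $\dim \sigma = \dim \Delta > \dim \tau$ and $\sigma$ has a vertex $u \notin \tau$; since the zero set of the barycentric coordinates of $w$ is the single index of $\tau$ while that of $u$ omits this index, the open segment $(w,u)$ lies in $\operatorname{int}\Delta$, so $wu$ is an edge of $G$. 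Thus $G$ has $v_{\mathrm{fac}}$ vertices supported on facets, say $v_{\geq 2}$ vertices supported on faces of codimension $\geq 2$, and $e_{\mathrm{int}} = v_{\mathrm{fac}}$ edges; therefore $\chi(G) = (v_{\mathrm{fac}} + v_{\geq 2}) - e_{\mathrm{int}} = v_{\geq 2}$.

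It therefore suffices to prove that \emph{every connected component of $G$ contains at least one vertex supported on a face of codimension at least $2$}. Granting this, if $G$ has $c$ components then $v_{\geq 2} \geq c$, while $\chi(G) = c - b_1(G)$ and $\chi(G) = v_{\geq 2}$, forcing $b_1(G) = 0$ and $v_{\geq 2} = c$; so $G$ is a disjoint union of trees, each containing exactly one such vertex.

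To prove this claim I would argue by contradiction: suppose $C$ is a component of $G$ all of whose vertices are supported on facets. An edge of $C$ with both endpoints in a common facet $\tau$ would lie in $\tau \subseteq \partial \Delta$, so the endpoints of each edge of $C$ lie in distinct facets and the set $I$ of facets whose relative interior meets $C$ has $|I| \geq 2$. Since the vertices of $C$ all avoid the face $F_0 = \bigcap_{i \in I} \tau_i$, radial projection away from $F_0$ (which carries line segments to line segments) maps $C$ to a connected graph inside the simplex on the vertices indexed by $I$, with all vertices in relative interiors of its facets and all edges crossing its interior. I expect the contradiction to come from a localization: restricting $\Gamma$ to a suitably chosen face on which $C$ imposes more facet-interior vertices than interior edges produces a triangulation of a simplex with negative $\ell_2$, which is impossible by Stanley's nonnegativity theorem. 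The crux — and the place where the hypothesis $\dim \Delta \geq 3$ is essential — is exactly this last step: controlling the position of $C$ inside $\Gamma$ well enough to extract the inequality. Its necessity is visible from the triforce, whose internal edge graph in dimension $2$ is a single cycle, a component of precisely the forbidden type.
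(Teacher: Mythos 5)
Your combinatorial reduction is correct and is in fact a clean repackaging of the paper's Proposition~\ref{prop:components are trees or have cycles} and Corollary~\ref{cor:non-negative}. The formulas $\ell_1 = v_{\mathrm{int}}$ and $\ell_2 = e_{\mathrm{int}} - v_{\mathrm{fac}} - (d-1)v_{\mathrm{int}}$ match equation~\eqref{eqn:formula for l2}, the observation that every vertex in the relative interior of a facet belongs to $\Sigma(\Gamma)$ is exactly what the paper uses, and your global Euler-characteristic computation $\chi(\Sigma) = v_{\geq 2}$ together with $\chi(\Sigma) = c - b_1(\Sigma)$ and the claim ``every component has a low-excess vertex'' does force $b_1 = 0$ and $v_{\geq 2} = c$. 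That is a valid and pleasant way to phrase the accounting: where the paper sums $f_0^{<d-2}(C) - \chi(C)$ over components and argues each summand is nonnegative, you observe the same thing globally. So far so good.

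The gap is precisely the claim you defer: that when $\dim\Delta \geq 3$ every connected component of $\Sigma(\Gamma)$ contains a vertex carried by a face of codimension at least $2$. This is the entire geometric content of the theorem (Proposition~\ref{prop:each component has a low excess or low chi} in the paper), and your sketch for it does not work as stated. The edges of such a component $C$ meet $\operatorname{int}\Delta$, so they are invisible in any restriction $\Gamma_F$ to a proper face; there is therefore no face of $\Delta$ ``on which $C$ imposes more facet-interior vertices than interior edges,'' and Stanley nonnegativity of some $\ell_2(\Gamma_F)$ has nothing to grip. The radial projection of $C$ away from $\bigcap_{i\in I}\tau_i$ does land in a lower-dimensional simplex with the right incidence pattern, but its image is not the internal edge graph of a triangulation of that simplex, so again no nonnegativity theorem applies off the shelf. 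The paper's argument is of a different character: it fixes an edge $e=\Conv{v_1,v_2}$ of $C$ maximizing the linear functional $L=\pi_1+\pi_2$ among edges joining the two facets carrying its endpoints, projects the closed star of $e$ in two ways, uses the interiority of $e$ to produce a vertex $w \in \lk(e)$ with $L(P_1(w)-v_1)>0$ and $L(P_2(w)-v_2)>0$, and shows $w$ lies on neither facet (else maximality would be violated), giving a $3$-cycle; then, for $d\geq4$, it pushes off the $2$-face $\Conv{v_1,v_2,w}$ to produce a second $3$-cycle, which makes $\chi(C)\leq -1$ and forces $\ell_2>0$. You would need some argument of comparable strength to close the claim; the localization you propose does not supply it.
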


\noindent However, the pattern of obtaining all triangulations whose local $h$-polynomials vanish from a finite collection of examples by iterated conical facet refinements does not continue in higher dimensions.  See Section~\ref{sec:higherdim}.

Our investigation into the structure of triangulations with vanishing local $h$-polynomial
is motivated by recent connections to algebraic and arithmetic geometry. 
Local $h$-polynomials appear prominently in formulas for dimensions of homology groups of
intersection complexes for toric morphisms \cite{deCataldoMiglioriniMustata18} and 
multiplicities of eigenvalues of monodromy \cite{KatzStapledon16, Stapledon17}.  
In Igusa's $p$-adic monodromy conjecture \cite{Igusa75}, and the motivic generalization of
Denef and Loeser \cite{DenefLoeser98}, the essential question is understanding whether or
not these multiplicities vanish.  This problem is also natural and interesting from a
purely combinatorial viewpoint; Stanley specifically asked for a nice characterization of 
such triangulations in his original paper \cite[Problem~4.13]{Stanley92}.  

\medskip

\noindent \textbf{Acknowledgments.}  This project was carried out as part of the SUMRY undergraduate research program. We thank the other participants and mentors in this program for their support and encouragement.  We are also grateful to Matt Larson for helpful conversation related to this project.  Our work was partially supported by NSF grant DMS-1702428.

\section{Preliminaries}

We consider only geometric subdivisions $\Gamma$ of a $(d-1)$-simplex $\Delta$, except where explicitly stated otherwise.  In particular, all of the triangulations that we consider are realized by subdividing a linearly embedded simplex into subsimplices.  For further details and background on local $h$-polynomials, we refer the reader to the recent survey article of Athanasiadis \cite{Athanasiadis16}, as well as \cite{Stanley92}.

\subsection{Formulas for the local $h$-polynomial}
We recall two useful formulas for the local $h$-polynomial $\ell(\Gamma;x)$.  First, by applying M\"{o}bius inversion to \eqref{eq:h=sum-of-l}, we can express $\ell(\Gamma;x)$ as an alternating sum of $h$-polynomials:
\begin{equation} \label{eq:alternating sum}
\ell(\Gamma;x) = \sum_{F \leq \Delta}(-1)^{\codim (F)} h(\Gamma_F;x).
\end{equation}
Here, the codimension is $\codim(F) = d - 1 - \dim(F)$, since $\dim(\Delta) = d-1$.

Let $\sigma: \Gamma \rightarrow \Delta$ be the map taking a face $G \in \Gamma$ to the smallest face $F \leq \Delta$ that contains it.  One says that $G$ is \emph{carried by} $\sigma(G)$, and the \emph{excess} of $G$ is
\[
e(G) := \dim (\sigma(G)) - \dim (G).
\]
As in \cite[Prop. 2.2]{Stanley92}, we can then express the local $h$-polynomial as
\begin{equation}\label{eq:excess formula}
\ell(\Gamma; x) = \sum_{G \in \Gamma} (-1)^{\codim(G)} x^{d- e(G)} (x-1)^{e(G)}.
\end{equation}

\begin{definition}
We say that $G \in \Gamma$ is an \emph{interior face} if $\sigma(G) = \Delta$.  
\end{definition}

\subsection{Elementary properties of the local $h$-polynomial}

The following basic properties of local $h$-polynomials were mentioned in the introduction and will be used in our main arguments.

Given triangulations $\Gamma$ and $\Gamma'$ of simplices $\Delta$ and $\Delta'$, respectively, the join $\Gamma * \Gamma'$ is naturally a triangulation of the simplex $\Delta * \Delta'$.  It is well-known that $h$-polynomials are multiplicative for joins, i.e.,
\[
h(\Gamma*\Gamma';x) = h(\Gamma;x) \cdot h(\Gamma';x).
\]
It follows that the local $h$-polynomial has the analogous property \cite[Lemma~2.2]{AthanasiadisSavvidou12}, i.e.,
\begin{equation} \label{eq:join}
\ell(\Gamma*\Gamma';x) = \ell(\Gamma;x) \cdot \ell(\Gamma';x).
\end{equation}

A triangulation $\Gamma'$ of $\Delta$ is a \emph{refinement} of $\Gamma$ if every $G \in \Gamma$ is a union of faces of $\Gamma'$.  We then write $\Gamma'_G$ for the restriction of $\Gamma'$ to $G$. We are particularly interested in the restriction of $\Gamma'$ to maximal faces, or \emph{facets}.

\begin{definition}
Let $\Gamma'$ be a refinement of $\Gamma$, and let $G$ be a facet of $\Gamma$.  We say that $\Gamma'$ is a \emph{facet refinement of $\Gamma$ along $G$} if, for any facet $H \neq G$, the restriction $\Gamma'_H$ is trivial.
\end{definition}

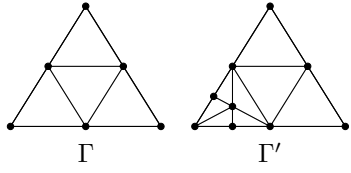
\begin{figure}[h!] \label{fig:facet-refinement}
{
\centering
\begin{tabular}{cc}
\begin{tikzpicture}[scale=2]
\draw[fill=white] (0,0) -- (1,0) -- (.5, .8) -- cycle;
\filldraw (0,0) circle (.6pt) -- (1,0) circle (.6pt) -- (.5, .8) circle (.6pt) -- (0,0);
\filldraw (.5,0) circle (.6pt) -- (.75, .4) circle (.6pt) -- (.25, .4) circle (.6pt) -- (.5,0);
\end{tikzpicture} 
&
\begin{tikzpicture}[scale=2]
\draw[fill=white] (0,0) -- (1,0) -- (.5, .8) -- cycle;
\filldraw (0,0) circle (.6pt) -- (1,0) circle (.6pt) -- (.5, .8) circle (.6pt) -- (0,0);
\filldraw (.5,0) circle (.6pt) -- (.75, .4) circle (.6pt) -- (.25, .4) circle (.6pt) -- (.5,0);
\filldraw (.25, .4) -- (.25, 0) circle (.6pt);
\filldraw (.125, .2) circle (.6pt) -- (.5, 0);
\filldraw (0,0) -- (.25, .133) circle (.6pt);
\end{tikzpicture} 
\\
$\Gamma$ 
&
$\Gamma'$
\end{tabular}\\
}
\caption{A facet refinement of the triforce along its lower left facet.}
\end{figure}

Local $h$-polynomials behave additively with respect to facet refinements. 

\begin{proposition}\label{additivity of local h}
Let $\Gamma'$ be a facet refinement of $\Gamma$ along $G$, and let $\Gamma'_{G}$ denote the triangulation of $G$ induced by restricting $\Gamma'$.  Then
\[
\ell(\Gamma';x) = \ell(\Gamma;x)  + \ell(\Gamma'_G; x).
\]

\end{proposition}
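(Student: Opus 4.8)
The plan is to compute both sides of the claimed identity using the excess formula \eqref{eq:excess formula} and compare the contributions face by face. Write $\Gamma$ for the coarse triangulation of $\Delta$, $G$ for the distinguished facet, and let $\sigma \colon \Gamma' \to \Delta$ and $\sigma_G \colon \Gamma'_G \to G$ be the carrier maps for $\Gamma'$ and for the induced triangulation $\Gamma'_G$ of $G$. Since $\dim G = d-2$, the local $h$-polynomial of $\Gamma'_G$ is $\ell(\Gamma'_G;x) = \sum_{K \in \Gamma'_G} (-1)^{\codim_G(K)} x^{(d-1) - e_G(K)}(x-1)^{e_G(K)}$, where the codimension and excess are taken relative to $G$ as a $(d-2)$-simplex. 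The first bookkeeping step is to relate these quantities to the ones computed inside $\Delta$: for a face $K \in \Gamma'_G$, its carrier in $\Delta$ is the same set as its carrier in $G$, so its excess $e(K)$ is unchanged, whereas $\codim(K) = \codim_G(K) + 1$ since $\Delta$ has dimension one more than $G$. Hence $\ell(\Gamma'_G;x)$, when its terms are reindexed by the natural inclusion $\Gamma'_G \hookrightarrow \Gamma'$, contributes $-\sum_{K \subseteq G} (-1)^{\codim(K)} x^{d - e(K)}(x-1)^{e(K)}$ — that is, minus the "$G$-part" of the excess sum for $\Gamma'$ — up to the shift of exponent $d$ vs $d-1$, which I will need to handle carefully (multiplying the $G$-formula through and matching powers of $x$).

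Next I partition the faces of $\Gamma'$ into two classes: those contained in $G$, and those not contained in $G$. For a face $H$ of $\Gamma'$ not contained in $G$, the hypothesis that $\Gamma'$ refines $\Gamma$ trivially on every facet other than $G$ forces $H$ to be a face of the original triangulation $\Gamma$ — indeed, such an $H$ must lie in the union of the other facets, where $\Gamma'$ agrees with $\Gamma$, or be $\Delta$ itself — and moreover its carrier and excess with respect to $\Gamma'$ agree with those with respect to $\Gamma$. Conversely every face of $\Gamma$ either lies in $G$ or not; those not in $G$ appear verbatim in $\Gamma'$, while the restriction $\Gamma_G$ of $\Gamma$ to $G$ is the trivial triangulation, whose faces are exactly the faces of the simplex $G$. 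So the excess sum for $\Gamma$ breaks as (sum over faces of $\Gamma$ not in $G$) plus (sum over subfaces of the simplex $G$), and likewise the excess sum for $\Gamma'$ breaks as (the identical sum over faces not in $G$) plus (sum over faces of $\Gamma'_G$). Subtracting, $\ell(\Gamma';x) - \ell(\Gamma;x)$ equals the excess sum over $\Gamma'_G$ minus the excess sum over the trivial triangulation of $G$; the latter is $\ell(\text{trivial};x) = 0$ by the vanishing property quoted in the introduction. Reconciling the former with $\ell(\Gamma'_G;x)$ via the codimension-shift and exponent-shift identity from the first step then yields the claim.

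The main obstacle I anticipate is the careful handling of the two dimension shifts at once: passing from $G$ (a $(d-2)$-simplex) to $\Delta$ (a $(d-1)$-simplex) changes $\codim$ by $+1$, which flips the overall sign, and it also changes the ambient "$d$" in the monomial $x^{d-e}(x-1)^e$ by one, so the naive reindexing does not literally identify the two sums — one must verify that the extra factor of $x$ is exactly accounted for, presumably by checking that the excess formula for $\ell(\Gamma'_G;x)$ as a triangulation of a $(d-2)$-simplex, when all its terms are multiplied by $-x$, coincides termwise with the $G$-indexed part of the excess formula for $\ell(\Gamma';x)$. A clean way to organize this is to note that $\ell$ is multiplicative for joins \eqref{eq:join} and that coning (joining with a point) multiplies $\ell$ by $\ell$ of a point, which is $1$; but here the subtlety is that $\Gamma'_G$ is not literally a cone, so I expect instead to just push the termwise bijection through honestly. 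A secondary point requiring a line of justification is the claim that a face of $\Gamma'$ not contained in $G$ is genuinely a face of $\Gamma$ with the same carrier — this is where the hypothesis "$\Gamma'_H$ trivial for all facets $H \neq G$" is used, together with the observation that any face of $\Gamma'$ is contained in some facet of $\Gamma$.
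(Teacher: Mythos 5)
Your approach — compute $\ell(\Gamma';x)$ and $\ell(\Gamma;x)$ directly from the excess formula and match terms — is genuinely different from the paper's, which instead proves the identity $h(\Gamma';x) = h(\Gamma;x) + h(\Gamma'_G;x) - h(G;x)$ by inclusion--exclusion on $f$-vectors, expands each $h$-polynomial into local $h$-polynomials via \eqref{eq:h=sum-of-l}, and then closes the loop by induction on dimension. However, your version has a genuine gap that I don't think is merely bookkeeping.

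First, a dimension error: $G$ is a \emph{facet of the triangulation} $\Gamma$, i.e.\ a maximal simplex, so $\dim G = d-1$, not $d-2$. (In this paper ``facet'' refers to maximal faces of $\Gamma$, not to codimension-1 faces of $\Delta$.) Consequently there is no codimension shift $\codim(K) = \codim_G(K) + 1$ and no exponent shift $d$ vs.\ $d-1$; the difficulty you flag at the end of your sketch is not the real one.

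The real problem is your claim that ``for a face $K \in \Gamma'_G$, its carrier in $\Delta$ is the same set as its carrier in $G$, so its excess $e(K)$ is unchanged.'' This is false: the carrier of $K$ in $G$ is the smallest face of the simplex $G$ containing $K$, whereas the carrier of $K$ in $\Delta$ is the smallest face of $\Delta$ containing $K$, and faces of $G$ are not faces of $\Delta$. Concretely, in the refinement shown in Figure~1, take $G$ to be the lower-left small triangle of the triforce, with one vertex at a corner of $\Delta$ and the other two at edge midpoints. A vertex $u$ placed in the relative interior of the edge of $G$ joining the two midpoints has $\sigma_G(u)$ equal to that edge (excess $1$ in $G$), but $u$ lies in the interior of $\Delta$, so $\sigma_\Delta(u) = \Delta$ (excess $2$ in $\Delta$). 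The correct relation is $e_\Delta(K) = e_G(K) + e_\Delta(\sigma_G(K))$: the excess picks up a contribution from how the carrying face of $G$ sits inside $\Delta$. This also breaks your final step, where you claim the sum over the subfaces of the simplex $G$ is $\ell(\text{trivial};x)=0$: that sum uses $e_\Delta$, not the zero excess that makes the trivial local $h$-polynomial vanish. So the ``subtraction'' you perform does not produce $\ell(\Gamma'_G;x)$. To push the excess-formula route through, you would need to sum over the intermediate carrying faces $J$ of $G$ and exploit the identity above — which amounts to redoing, in less transparent form, the recursion that the paper handles cleanly via the $h$-polynomial identity and induction on dimension.
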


\noindent This follows from \cite[Corollary~4.7]{KatzStapledon16}, a much more general result about local $h$-polynomials for compositions of strong formal subdivisions, applied to $\Gamma' \to \Gamma \to \Delta$.  For the reader's convenience, we include a direct proof in our setting.

\begin{proof}
The assertion is vacuously true in dimension zero.  We proceed by induction on dimension.  First, we observe that
\begin{equation}
\label{eq:additiveh} h(\Gamma';x) = h(\Gamma;x) + h(\Gamma'_G;x) - h(G;x).
\end{equation}
This follows from the formula $\sum_{i = 0}^d f_i (x-1)^{d-i} = \sum_{i=0}^d h_i x^{d-i}$, by inclusion-exclusion. 

Next, write each $h$-polynomial in \eqref{eq:additiveh} as a sum of local $h$-polynomials, using \eqref{eq:h=sum-of-l}. The contributions of the empty faces cancel, and each nonempty face of $G$ contributes 0, since it is trivially subdivided. Therefore, we have
\begin{equation} \label{eq:sumofls}
\sum_{\{\varnothing\} \neq H \leq \Delta} \ell(\Gamma'_H;x) = \sum_{\{\varnothing\} \neq H \leq \Delta} \ell(\Gamma_H;x) + \sum_{\{\varnothing\} \neq H' \leq G} \ell(\Gamma'_{H'};x).
\end{equation}
For each nonempty face $H \leq \Delta$, either $\Gamma'_H = \Gamma_H$ or $\Gamma'_H$ is a facet refinement of $\Gamma_H$ along $G \cap H$.  If $H$ is a proper face of $\Delta$ then, by induction on dimension, we may assume $\ell(\Gamma'_H;x) = \ell(\Gamma_H;x) + \ell(\Gamma'_{G \cap H};x)$.  Similarly, for any nonempty face $H' \leq G$, either $\Gamma'_{H'}$ is the trivial subdivision, or $H'$ is carried by a face $H \leq \Delta$ such that  $H' = G \cap H$ and $\Gamma'_H$ is a facet refinement of $\Gamma_H$ along $H'$ . Thus, the contributions of proper faces in \eqref{eq:sumofls} cancel, and we conclude that $\ell(\Gamma';x) = \ell(\Gamma;x) + \ell(\Gamma'_G;x)$.
\end{proof}

\begin{definition}
A facet refinement $\Gamma'$ of $\Gamma$ along $G$ is a \emph{conical facet refinement} if $\Gamma'_G$ is the cone over $\Gamma'_H$ for some codimension 1 face $H < G$.
\end{definition}

\begin{corollary}  \label{cor:nfacetjoin}
Let $\Gamma'$ be a facet refinement of $\Gamma$ along $G$, and suppose that $\Gamma'_G$ is the join of two triangulations of faces of $G$, one of which has vanishing local $h$-polynomial.  Then $\ell(\Gamma';x) = \ell(\Gamma;x)$.  In particular, if $\Gamma'$ is a conical facet refinement of $\Gamma$, then $\ell(\Gamma';x) = \ell(\Gamma;x)$.  
\end{corollary}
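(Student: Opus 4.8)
The plan is to combine the additivity of Proposition~\ref{additivity of local h} with the multiplicativity of local $h$-polynomials under joins. Since $\Gamma'$ is a facet refinement of $\Gamma$ along $G$, Proposition~\ref{additivity of local h} gives $\ell(\Gamma';x) = \ell(\Gamma;x) + \ell(\Gamma'_G;x)$, so the entire statement reduces to checking that $\ell(\Gamma'_G;x) = 0$ under the stated hypothesis.

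By hypothesis $\Gamma'_G = \Gamma_1 * \Gamma_2$, where $\Gamma_i$ is a triangulation of a face $F_i \leq G$ and, say, $\ell(\Gamma_1;x) = 0$. I would first note that, since $\Gamma_1 * \Gamma_2$ is a triangulation of the simplicial join $F_1 * F_2$, the equality $\Gamma'_G = \Gamma_1 * \Gamma_2$ presupposes that $F_1$ and $F_2$ are complementary faces of $G$, i.e.\ $F_1 \cap F_2 = \varnothing$ and $F_1 * F_2 = G$; otherwise the join is not embedded in $G$. Then \eqref{eq:join} applies directly and yields $\ell(\Gamma'_G;x) = \ell(\Gamma_1;x) \cdot \ell(\Gamma_2;x) = 0$. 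Combined with the previous paragraph, this gives $\ell(\Gamma';x) = \ell(\Gamma;x)$.

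For the final sentence, a conical facet refinement is the special case in which $\Gamma'_G$ is the cone over $\Gamma'_H$ for some codimension~1 face $H < G$: writing $v$ for the unique vertex of $G$ not lying on $H$, we have $G = H * \{v\}$ and $\Gamma'_G = \Gamma'_H * \{v\}$, where $\{v\}$ carries the trivial triangulation of the $0$-simplex $v$. Since the trivial subdivision of a nonempty simplex has vanishing local $h$-polynomial --- which follows at once from \eqref{eq:excess formula}, as every face of a trivially subdivided simplex has excess $0$ --- the hypothesis of the corollary holds with the vanishing factor $\ell(\{v\};x)$, and the claim follows. I do not anticipate a genuine obstacle here; the only point that should be stated with care is that ``join of two triangulations of faces of $G$'' means a join along complementary faces of $G$, which is precisely what makes \eqref{eq:join} applicable, and that the degenerate factor (a vertex, or more generally any trivially subdivided simplex) is itself covered by the vanishing hypothesis.
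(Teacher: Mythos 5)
Your argument is correct and is exactly the paper's one-line proof (``This follows immediately from \eqref{eq:join} and Proposition~\ref{additivity of local h}'') spelled out in detail: additivity reduces the claim to $\ell(\Gamma'_G;x)=0$, and multiplicativity of local $h$-polynomials under joins finishes it. Your extra remarks---that the join must be along complementary faces for \eqref{eq:join} to apply, and that a cone is a join with a trivially subdivided vertex whose local $h$-polynomial vanishes---are accurate and are implicitly used by the paper, so this is the same proof with more of the bookkeeping made explicit.
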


\begin{proof}
This follows immediately from \eqref{eq:join} and Proposition~\ref{additivity of local h}.
\end{proof}

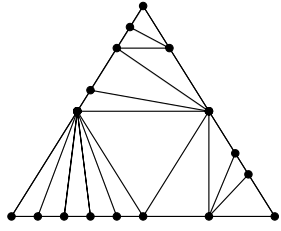
\begin{figure}[h!] \label{fig:conical-facet-refinement}
{
\centering
\begin{tikzpicture}[scale=3.5]
\draw[fill=white] (0,0) -- (1,0) -- (.5, .8) -- cycle;
\filldraw (0,0) circle (.4pt) -- (1,0) circle (.4pt) -- (.5, .8) circle (.4pt) -- (0,0);
\filldraw (.5,0) circle (.4pt) -- (.75, .4) circle (.4pt) -- (.25, .4) circle (.4pt) -- (.5,0);

\draw [fill= black](.1,0) circle (.4pt) -- (.25,.4) circle (.4pt) -- (.2,0) circle (.4pt) -- (.25,.4) circle (.4pt) -- (.3,0) circle (.4pt) -- (.25,.4) circle (.4pt) -- (.4,0) circle (.4pt);
\draw [fill= black] (.45,.72) circle (.4pt) -- (.6,.64) circle (.4pt) -- (.4,.64) circle (.4pt) -- (.75,.4) circle (.4pt) -- (.3,.48) circle (.4pt);
\draw [fill= black] (.75,0) circle (.4pt) -- (.75,.4) circle (.4pt);
\draw [fill= black] (.75,0) -- (.85,.24) circle (.4pt);
\draw [fill= black] (.75,0) -- (.9,.16) circle (.4pt);
\end{tikzpicture} 
\caption{A triangulation obtained from the triforce by a series of conical facet refinements.}
}
\end{figure}

\section{The internal edge graph}

The proofs of the main results of this paper all involve the \emph{internal edge graph}, formed by the edges of a triangulation $\Gamma$ that meet the interior of the simplex $\Delta$.  In this section, we study the properties of this graph when $d \geq 3$, and $\ell_1 = \ell_2 = 0$.

Let $f_i^j$ denote the number of $i$-simplices in $\Gamma$ that are carried by $j$-faces of $\Delta$.  From equation \eqref{eq:excess formula}, we see that $\ell_0 = 0$, $\ell_1 = f_0^{d-1}$, and
\begin{equation}\label{eqn:formula for l2}
\ell_2 = f_1^{d-1} -f_0^{d-2} - (d-1)f_0^{d-1}.
\end{equation}
In particular, if $\ell_1 = 0$ then $\ell_2 = f_1^{d-1} - f_0^{d-2}$.

Stanley proved non-negativity of all $\ell_i$, using methods from commutative algebra. 
It follows that if there are no interior vertices (i.e., if $\ell_1 = 0$) then 
$f_1^{d-1} \geq f_0^{d-2}$.  Note that every vertex carried by a $(d-2)$-face is contained 
in an interior edge, so this inequality is a statement about the internal edge graph. 
We now give a combinatorial proof of this inequality in a stronger form, showing in particular
that it holds separately on each connected component of the internal edge graph.

\begin{definition}
The \emph{internal edge graph} of a subdivision $\Gamma$ of $\Delta$ is the graph
$\Sigma(\Gamma)$ whose edges are the edges of $\Gamma$ carried by the improper face $\Delta$,
and whose vertices are the vertices incident to those edges. 
\end{definition}

\noindent When $\Gamma$ is clear from context, we will write $\Sigma$ rather than $\Sigma(\Gamma)$. 

\begin{proposition}\label{prop:each component has a low excess or low chi}
Let $\Gamma$ be a triangulation of a $(d - 1)$-simplex $\Delta$ with $d \geq 3$ and 
$\ell_1 = 0$.
Then each connected component $C$ of $\Sigma(\Gamma)$ contains either a vertex of excess
less than $d-2$ or a simple $3$-cycle. 

Furthermore, if $d \geq 4$, and $C$ has no vertices of excess less than $d-2$ then 
$C$ contains at least two distinct simple $3$-cycles. 
\end{proposition}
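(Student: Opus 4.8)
The plan is to analyze a single connected component $C$ of the internal edge graph $\Sigma$ by restricting the excess formula \eqref{eq:excess formula} to the faces "concentrated" on $C$, and to extract the inequality $\ell_2 = f_1^{d-1} - f_0^{d-2} \geq 0$ in a localized form. The key observation is that a vertex $v$ of $\Gamma$ carried by a $(d-2)$-face $F$ of $\Delta$ necessarily lies on an interior edge: the link of $v$ in $\Gamma$ must contain faces not carried by $F$, so there is an edge from $v$ leaving $F$, and since $F$ has codimension one, that edge is interior. Hence every vertex counted by $f_0^{d-2}$ is a vertex of $\Sigma$, and in fact a vertex of $\Sigma$ of excess exactly $d-2$ (being carried by a codimension-one face and having dimension $0$). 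So the statement "$C$ has no vertex of excess less than $d-2$" means every vertex of $C$ that is not interior is carried by a $(d-2)$-face, i.e. has excess exactly $d-2$.

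First I would set up a weight function on the vertices and edges of $C$ coming from \eqref{eq:excess formula}. An interior edge $e$ contributes $(-1)^{\codim e} x^{d-1}(x-1)$ to $\ell(\Gamma;x)$, so its degree-2 coefficient contribution is $+1$ (since an interior edge in a $(d-1)$-simplex has codimension... one must be careful: $\codim(e) = d-1-\dim(e) = d-2$, so $(-1)^{d-2}$, and the $x^{d-e(e)}(x-1)^{e(e)}$ with $e(e)=d-2$ contributes to $x^2$ with sign $(-1)^{d-2}\cdot(-1)^{d-2}=+1$ — wait, I should instead directly use the already-derived identity). Rather than recompute, I would use the stated formula $\ell_2 = f_1^{d-1} - f_0^{d-2}$ (valid since $\ell_1 = 0$) and prove the component-wise refinement: for each component $C$, the number of edges of $C$ carried by $\Delta$ minus the number of vertices of $C$ carried by $(d-2)$-faces is $\geq 0$, with equality only when $C$ has a controlled structure. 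Call this quantity $\chi^*(C)$; note $\chi^*(C) = (\#\text{edges of }C) - (\#\text{non-interior vertices of }C)$ under the hypothesis that all non-interior vertices have excess exactly $d-2$, which equals $(\#\text{edges}) - (\#\text{vertices}) + (\#\text{interior vertices of }C) = 1 - b_1(C) + (\#\text{interior vertices})$ by the Euler characteristic of the connected graph $C$.

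The heart of the argument is then a lower bound on the number of interior vertices of $C$ in terms of $b_1(C)$, the first Betti number (number of independent cycles) of the graph $C$. I would prove: if $d \geq 3$ and $C$ has no vertex of excess less than $d-2$, then $C$ has no interior vertices at all unless... no — actually the cleanest route: show that under the no-low-excess hypothesis, $\ell_1 = 0$ forces $C$ to contain a simple $3$-cycle whenever $b_1(C) \geq 1$ would otherwise be needed, and conversely $\chi^* (C) \geq 0$ forces a cycle. Concretely: if $C$ were a tree (so $b_1(C) = 0$) with all vertices non-interior of excess $d-2$, then $\chi^*(C) = (\#E) - (\#V) = -1 < 0$; summing over components would give $\ell_2 < 0$ unless some other component compensates — but we want the component-wise statement, so instead I argue that a component with only excess-$(d-2)$ vertices and $\chi^*(C) \geq 0$ must have $b_1(C) \geq 1$, hence contains a cycle, and a shortest cycle in a geometric triangulation of a simplex that lies in the interior must be a simple $3$-cycle (here geometricity enters: a simple cycle of length $\geq 4$ among interior edges would bound a region and, by considering a triangulation of the disk it spans within $\Gamma$, must contain an interior vertex or a chord, contradicting minimality or the no-interior-vertex situation). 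This handles the first sentence of the proposition as restricted to the "no low excess" case; combined with the observation that a component with a low-excess vertex automatically satisfies the first alternative, we get Proposition~\ref{prop:each component has a low excess or low chi}'s first assertion.

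For the $d \geq 4$ strengthening — two distinct simple $3$-cycles — the plan is to sharpen the count. When $d \geq 4$, a vertex $v$ of excess exactly $d-2 \geq 2$ carried by a $(d-2)$-face $F$ lies on at least... here is where the extra dimension gives room: I would argue that the link of $v$ restricted to directions leaving $F$ is itself a triangulation of a simplex of dimension $\geq 1$, forcing $v$ to be incident to at least two interior edges, so $C$ has minimum degree $\geq 2$ at every non-interior vertex. A connected graph with minimum degree $\geq 2$ and $\chi^*(C) = (\#E)-(\#V) \geq 0$ (equivalently $b_1(C) \geq 1$) where additionally we need to rule out $b_1(C) = 1$: if $b_1(C) = 1$ then $C$ is a single cycle with trees attached, but minimum degree $\geq 2$ forces $C$ to be exactly one cycle, and then $\chi^*(C) = 0$ with that cycle being a $3$-cycle (by the geometric argument above) — but I claim $d \geq 4$ rules this out too, because a vertex on that $3$-cycle, having excess $d - 2 \geq 2$, cannot have all its interior edges confined to a single triangle; the directions leaving its carrier $F$ span a space of dimension $d - 1 - \dim F + \dim F$... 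I would need to check this degree/dimension bookkeeping carefully, but the upshot should be $b_1(C) \geq 2$, and then a connected graph with $b_1 \geq 2$ contains at least two distinct simple cycles, each of which (by the geometricity/minimality argument) can be taken to be a $3$-cycle.

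**Main obstacle.** The hard part will be the geometric input: proving that a shortest interior simple cycle in a geometric triangulation $\Gamma$ of $\Delta$ must have length $3$. A purely combinatorial cycle of length $\geq 4$ need not bound an obvious sub-disk of $\Gamma$ unless one works inside the interior of $\Delta$ and uses that $\Gamma$ is a geometric (hence shellable/PL-ball) triangulation; one likely needs to take the minimal simple cycle, observe it bounds a polygonal loop in $\interior(\Delta)$, span it by a subcomplex of $\Gamma$ (the "inside"), and argue that subcomplex either has an interior vertex (producing, via the excess/$\ell_1$ hypothesis, a contradiction or a shorter cycle) or contains a diagonal edge of the cycle (contradicting simplicity/minimality). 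Getting this argument clean — and making sure it also pins down the exact-degree statements needed for the $d \geq 4$ refinement — is where the real work lies; everything else is Euler-characteristic bookkeeping built on the already-established identity $\ell_2 = f_1^{d-1} - f_0^{d-2}$ and Stanley's non-negativity.
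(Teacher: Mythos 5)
Your plan inverts the logical order of the paper and, as a result, is circular at a key step. You want to derive the existence of a cycle in a component $C$ with no low-excess vertices from the inequality $\chi^*(C) = f_1^{d-1}(C) - f_0^{d-2}(C) \geq 0$, arguing that a tree would have $\chi^*(C) = -1 < 0$. But this \emph{component-wise} nonnegativity is precisely Corollary~\ref{cor:non-negative}, which the paper deduces \emph{from} Proposition~\ref{prop:each component has a low excess or low chi}. Stanley's nonnegativity only gives the global inequality $\ell_2 = \sum_C \chi^*(C) \geq 0$; nothing yet prevents one component from being a tree of excess-$(d-2)$ vertices with negative $\chi^*$ compensated by another component. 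You acknowledge this (``we want the component-wise statement''), but the follow-up sentence assumes $\chi^*(C) \geq 0$ anyway. So the existence of even one cycle is not established.

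A second, independent gap is the step from ``$C$ contains a cycle'' to ``$C$ contains a simple $3$-cycle.'' You flag this as the main obstacle and sketch a minimal-cycle-bounds-a-disk argument, but in dimension $\geq 3$ a simple cycle of interior edges need not bound any disk in $\Gamma$ or be ``spanned'' by a subcomplex; the interior $1$-skeleton is a graph embedded in a $(d-1)$-ball, not a surface. The paper sidesteps this entirely: it does not start from an abstract cycle and shorten it, but instead builds a $3$-cycle directly by taking an interior edge $e = \Conv{v_1,v_2}$ maximizing a linear functional $L = \pi_1 + \pi_2$ over all such edges, using linear projections $P_1, P_2$ of $\Cl(\Star(e))$ onto the hyperplanes of $F_1, F_2$, and showing some link vertex $w$ satisfies $L(P_i(w) - v_i) > 0$ for both $i$ and therefore cannot lie in $F_1$ or $F_2$ by maximality of $e$. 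This is simultaneously the source of the cycle and of its triangularity. Your approach provides neither without, in effect, reproving this lemma.

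Finally, the $d \geq 4$ strengthening in your sketch (minimum degree $\geq 2$ at non-interior vertices, then $b_1(C) \geq 2$) rests on dimension bookkeeping you explicitly say you ``would need to check carefully''; the claimed degree bound is not automatic (a vertex of excess $d-2$ could a priori have a unique interior edge, to a vertex that is \emph{also} of excess $d-2$). The paper instead produces the second $3$-cycle directly, by projecting $\Cl(\Star(G))$ for the interior $2$-face $G = \Conv{v_1,v_2,w}$ and extracting a fourth vertex $w'$. In short: the Euler-characteristic bookkeeping you rely on is a \emph{consequence} of the proposition, not an input to it, and the geometric core of the argument is still missing.
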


\begin{proof}
We may assume that $\Delta$ is the standard $(d-1)$-simplex in $\R^{d}$, i.e., 
the convex hull of the standard basis vectors $e_1,\ldots,e_{d}$. Let
$F_i = \Conv{e_1,\ldots,\widehat{e_i},\ldots,e_{d}}$, and let 
$\pi_i : \R^d \to \R$ be the projection onto the $i^{\mathrm{th}}$ coordinate axis.

Suppose $C$ is a connected component of $\Sigma$ all of whose vertices have excess $d-2$. 
We now show that $C$ contains a simple 3-cycle. 

Consider the two codimension 1 faces carrying the endpoints of an edge of $C$. Without 
loss of generality, we may assume these to be $F_1$ and $F_2$.  Let $L$ be the linear 
functional $ \pi_1 + \pi_2$.  Let $e= \Conv{v_1, v_2}$ be an edge of $C$ such that 
$v_1 \in F_1$, and $v_2 \in F_2$, such that $L(v_1 + v_2)$ is maximal among all such 
edges. 
For $(i,j) \in \{(1,2),(2,1)\}$, let $P_i : \R^{d}\to\R^d$ be the projection to the 
linear hyperplane spanned by $F_i$ along $v_j-v_i$. 
More explicitly, for all $w \in \R^{d}$,
\[
P_i(w) = w - \frac{\pi_i(w)}{\pi_i(v_j)}(v_j - v_i),
\]
Substituting in the definitions above, we observe that 
\begin{align*}
\pi_i(v_j)L(P_i(w) - v_i) & = \pi_i(v_j)\pi_j(P_i(w)) - \pi_i(v_j)\pi_j(v_i) \\
& = \pi_i(v_j)\pi_j(w) + \pi_j(v_i)\pi_i(w) - \pi_i(v_j) \pi_j(v_i).
\end{align*}
This is invariant under switching $i$ and $j$, and hence
\begin{equation}\label{e:projection equality}
\pi_1(v_2)L(P_1(w) - v_1) = \pi_2(v_1)L(P_2(w) - v_2).
\end{equation}

Applying $P_1$ to the closed star $\Cl(\Star(e))$, we see that $e$ projects to $v_1$, and 
2-faces project to segments ending at $v_1$. Since $e$ is an interior edge, $v_1$ is in 
the relative interior of $P_1(\Cl(\Star(e)))$.  Therefore,  
for any hyperplane $H$ in the image of $P_1$ that contains $v_1$, there is a vertex of 
the link of $e$ that projects to either side of that hyperplane. In particular,
there is a vertex $w$ in the link of $e$ such that $L(P_1(w) - v_1) > 0$. 
Recalling that all vertices of $C$ have excess $d-2$, we note that 
$\pi_1(v_2)>0$ and $\pi_2(v_1)>0$, otherwise they would be contained in 
$F_1\cap F_2$.
Therefore, by \eqref{e:projection equality}, $L(P_2(w) - v_2) > 0$. 

We claim that $w$ is not carried by $F_1$ or $F_2$. 
Indeed, if $w$ is carried by $F_1$, we would have $P_1(w) = w$ and 
$L(v_2 + w) > L(v_2 + v_1)$, contradicting our maximality assumption on $e$. Thus 
$w \notin F_1$. An identical argument shows that $w$ is not carried by $F_2$. This proves 
the claim.  Then without loss of generality, we may assume that $w$ is carried by $F_3$. 
The three interior edges of $C$ connecting $v_1$, $v_2,$ and $w$ form a simple 3-cycle. 
If $d = 3$, then we are done.

Suppose $d \geq 4$.  We must show that $C$ contains another simple $3$-cycle.  
Since $w$ is in $\lk e$, $\Gamma$ has an interior $2$-face $G = \Conv{v_1,v_2, w}$. 
Let $p$ denote the projection along $G$ onto the codimension 2 linear subspace spanned by 
$F_1 \cap F_2$. Then $p(G)$ is in the relative interior of $p(\Cl(\Star(G))$, and hence 
there exists a vertex $w' \notin F_1 \cap F_2 \cap F_3$ in the link of $G$. Without loss 
of generality, $w' \notin F_1$. Then $\Conv{v_1, w'}$ is an interior edge in $C$, and 
hence $w'$ is carried by a codimension 1 face of $\Delta$. Since $w' \notin F_2 \cap F_3$, 
we may assume, without loss of generality, that $w' \notin F_2$. Then the edges of $G$ 
and $\Conv{v_1, v_2, w'}$ form two distinct simple $3$-cycles in $C$. 
\end{proof}

Let $f_i^j(C)$ denote the number of $i$-faces of $C$ that are carried by $j$-faces of 
$\Delta$.

\begin{corollary} \label{cor:non-negative}
Suppose $d \geq 3$ and $\ell_1 = 0$.  Then each connected component $C$ of $\Sigma$ has
\[
f_0^{d-2}(C) \leq f_1^{d-1}(C).
\]
\end{corollary}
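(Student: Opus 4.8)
The plan is to translate the inequality into a statement about the graph $C$ and then quote Proposition~\ref{prop:each component has a low excess or low chi}. First I would pin down what the two face-counts mean for $C$. Since $\Delta$ is the unique $(d-1)$-dimensional face of $\Delta$, and every edge of $\Sigma$ is by definition carried by $\Delta$, the number $f_1^{d-1}(C)$ is exactly the number of edges of $C$. For the left-hand side, note that $\ell_1 = f_0^{d-1} = 0$ means $\Gamma$ has no interior vertices, so every vertex of $C$ is carried by a proper face of $\Delta$ and hence has excess at most $d-2$; those of excess exactly $d-2$ are precisely the ones carried by $(d-2)$-faces, so $f_0^{d-2}(C)$ counts these and in particular is at most the total number of vertices of $C$, which I denote $|V(C)|$.

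Then I would split into the two cases provided by Proposition~\ref{prop:each component has a low excess or low chi}. If $C$ contains a vertex of excess strictly less than $d-2$, that vertex is not counted by $f_0^{d-2}(C)$, so $f_0^{d-2}(C) \le |V(C)| - 1$; since $C$ is connected it has at least $|V(C)| - 1$ edges, i.e. $f_1^{d-1}(C) \ge |V(C)| - 1 \ge f_0^{d-2}(C)$. If instead $C$ contains a simple $3$-cycle, then $C$ is a connected graph containing a cycle, so it has at least $|V(C)|$ edges, whence $f_1^{d-1}(C) = |E(C)| \ge |V(C)| \ge f_0^{d-2}(C)$. Either way the claimed inequality holds.

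I do not expect a real obstacle here: the content is entirely contained in Proposition~\ref{prop:each component has a low excess or low chi}, and the remaining ingredient is the elementary fact that a connected graph on $n$ vertices has at least $n-1$ edges, and at least $n$ if it is not a tree. The one place to be slightly careful is the excess bookkeeping: one must use $\ell_1 = 0$ to know that every vertex of $\Sigma$ has excess at most $d-2$, so that, for vertices of $C$, having excess less than $d-2$ is equivalent to not being carried by a $(d-2)$-face, and hence a single low-excess vertex really does force $f_0^{d-2}(C) < |V(C)|$.
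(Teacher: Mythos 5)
Your proof is correct and is essentially the paper's argument, merely phrased in terms of vertex and edge counts rather than the Euler characteristic $\chi(C) = |V(C)| - |E(C)|$. The paper packages the same two cases from Proposition~\ref{prop:each component has a low excess or low chi} via the identity $f_0^{d-2}(C) - f_1^{d-1}(C) = \chi(C) - f_0^{<d-2}(C)$ together with $\chi(C) \le 1$ for a connected graph (with $\chi(C) \le 0$ when there is a cycle), which is exactly your bookkeeping in slightly more compressed form.
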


\begin{proof}
Recall that the Euler characteristic of a connected graph $C$ is 
\[
\chi(C) = \# \{ \mbox{vertices of $C$}\} - \#\{ \mbox{edges of $C$} \}.
\]
This is also equal to $1 - h^1(C)$, where $h^1$ denotes the first Betti number.  Hence $\chi(C) \leq 1$, with equality if and only if $C$ is a tree.  Moreover, $\chi(C) = 0$ if and only if $C$ contains a unique simple cycle.

Let $C$ be a connected component of the internal edge graph.  Then $$f_0^{d-2}(C) - f_1^{d-1}(C) = \chi(C) - f_0^{< d-2}(C).$$  By Proposition~\ref{prop:each component has a low excess or low chi}, either $\chi(C) = 1$ and $f_0^{< d-2}$ is positive, or $\chi(C) \leq 0$.
\end{proof}

In Section~\ref{sec:dim23}, we will repeatedly use the following structural description of the internal edge graph.

\begin{proposition}\label{prop:components are trees or have cycles}
Suppose $d \geq 3$, $\ell_1=\ell_2 = 0$, and $C$ is a connected component of $\Sigma$.  Then either
\begin{enumerate}
\item $C$ is a tree with a unique vertex of excess less than $d-2$, or
\item $d=3$, every vertex of $C$ has excess $d-2$, and $C$ has a unique simple 3-cycle.
\end{enumerate}
\end{proposition}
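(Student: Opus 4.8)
The plan is to leverage the per-component inequality of Corollary~\ref{cor:non-negative} together with the hypothesis $\ell_2 = 0$ to force that inequality to be an equality on \emph{every} connected component of $\Sigma$, and then read off the two alternatives from the Euler characteristic of $C$.

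First I would record the consequences of $\ell_1 = 0$: since $\ell_1 = f_0^{d-1}$ counts interior vertices, $\Gamma$ has none, so every vertex of $\Sigma$ has excess at most $d-2$. Consequently the vertex count of a component $C$ splits as $f_0^{d-2}(C) + f_0^{<d-2}(C)$, where $f_0^{<d-2}(C)$ counts vertices of $C$ of excess strictly less than $d-2$. Next, every interior edge and every vertex carried by a $(d-2)$-face of $\Delta$ lies in exactly one component of $\Sigma$, so $f_1^{d-1}$ and $f_0^{d-2}$ are additive over components. Using $\ell_1 = 0$ and \eqref{eqn:formula for l2} gives
\[
0 = \ell_2 = \sum_C \bigl( f_1^{d-1}(C) - f_0^{d-2}(C) \bigr),
\]
and each summand is non-negative by Corollary~\ref{cor:non-negative}; hence $f_1^{d-1}(C) = f_0^{d-2}(C)$ for every component $C$.

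Then I would repeat the bookkeeping from the proof of Corollary~\ref{cor:non-negative}, namely $\chi(C) = f_0^{d-2}(C) + f_0^{<d-2}(C) - f_1^{d-1}(C) = f_0^{<d-2}(C)$. Since $\chi(C) \le 1$ for a connected graph, $C$ has at most one vertex of excess less than $d-2$. If it has exactly one, then $\chi(C) = 1$, so $C$ is a tree with a unique such vertex, which is conclusion (1). If it has none, then $\chi(C) = 0$, so $C$ contains a unique simple cycle and all of its vertices have excess $d-2$; by Proposition~\ref{prop:each component has a low excess or low chi}, $C$ contains a simple $3$-cycle, which must therefore be that unique simple cycle, and moreover if $d \ge 4$ the same proposition would produce a second, distinct simple $3$-cycle, contradicting uniqueness. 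Hence $d = 3$, giving conclusion (2).

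There is no serious obstacle here: the argument is a short deduction from Corollary~\ref{cor:non-negative} and Proposition~\ref{prop:each component has a low excess or low chi}. The only points needing a little care are the observation that $f_1^{d-1}$ and $f_0^{d-2}$ are additive over the connected components of $\Sigma$ — so that the global identity $\ell_2 = 0$ can be promoted to a per-component equality — and the identity relating $\chi(C)$, $f_0^{d-2}(C)$, and $f_0^{<d-2}(C)$; both follow immediately once one notes that under $\ell_1 = 0$ every vertex of $\Sigma$ has excess at most $d-2$.
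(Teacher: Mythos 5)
Your proposal is correct and follows essentially the same route as the paper's proof: write $\ell_2$ as a sum over connected components using the identity $f_1^{d-1}(C) - f_0^{d-2}(C) = f_0^{<d-2}(C) - \chi(C)$, invoke Corollary~\ref{cor:non-negative} to force each summand to vanish, and then read off the two cases from $\chi(C) = f_0^{<d-2}(C)$ together with Proposition~\ref{prop:each component has a low excess or low chi} (including its $d \geq 4$ clause to rule out the cycle case in higher dimensions). The paper states this more compactly, but you have spelled out the same argument correctly, including the key observation that $\chi(C) = 0$ means $C$ has a unique simple cycle, so a second simple $3$-cycle would be a contradiction.
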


\begin{proof}
Since $\ell_1 = 0$, we can write $\ell_2$ as a sum over connected components of $\Sigma$
\begin{align*}
\ell_2 & = \sum_C f_1^{d-1}(C) - f_0^{d-2}(C),\\
& = \sum_C f_0^{< d-2}(C) - \chi(C).
\end{align*}
By Proposition~\ref{prop:each component has a low excess or low chi} and Corollary~\ref{cor:non-negative}, each summand is nonnegative, and the summand corresponding to $C$ is zero if and only if either $C$ is a tree with a unique vertex of excess less than $d-2$, or $d=3$, every vertex of $C$ has excess $d-2$, and $C$ contains a unique simple 3-cycle.
\end{proof}

Note that Theorem \ref{thm:internaledgegraph} follows immediately from Proposition \ref{prop:components are trees or have cycles}.

\begin{remark} \label{rem:connected}
Note that the internal edge graph of any triangulation of a $2$-simplex is connected. To see this, consider a polyhedral subdivision of a triangle with disconnected internal edge graph.  It must contain a polygon that meets two different components of the internal edge graph, and such a polygon has at least four edges: at least two segments of the boundary, and at least one edge from each component of the internal edge graph that it meets. In particular, it is not a triangulation.
\end{remark}

\begin{definition}
Let $F$ be a facet in a triangulation of the simplex $\Delta$.  
We say that $F$ is a \emph{pyramid} if there is some proper face of $\Delta$ that 
contains every vertex of $F$ except one.
\end{definition}

\begin{remark} \label{rem:pyramids}
Understanding when a facet of a triangulation is a pyramid is helpful for potential 
applications, e.g., to the monodromy conjecture for nondegenerate hypersurfaces, as 
mentioned in the introduction.  Note that, if $\Gamma'$ is a conical facet refinement of 
$\Gamma$ along $G$, then both $G$ and every facet of $\Gamma'_G$ is such a pyramid. Hence, 
it follows from Theorem~\ref{thm:dim3} that, if $d = 4$ and $\ell(\Gamma;x) = 0$, then every facet is 
a pyramid.  Similarly, it follows from Theorem~\ref{thm:dim2} that, if $d = 3$ and 
$\ell(\Gamma;x) = 0$, then at most one facet is not a pyramid. 
\end{remark}

The following proposition is not used in the remainder of the paper, but illustrates how 
the structure of the internal edge graph gives useful statements about which facets of a 
triangulation with vanishing local $h$-polynomial are pyramids.

\begin{proposition} \label{prop:pyramid}
Let $\Gamma$ be a triangulation of a $(d - 1)$-simplex $\Delta$ with $d \ge 4$. If $\ell_1= \ell_2 = 0$,
then every facet $G$ of $\Gamma$ that contains an interior edge is a pyramid.
\end{proposition}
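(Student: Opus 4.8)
The plan is to translate everything into conditions on the supports of the vertices of $G$, work inside the internal edge graph restricted to $G$, and derive a contradiction from the assumption that $G$ is not a pyramid by producing a $4$-cycle. Realize $\Delta$ as the standard $(d-1)$-simplex with facets $F^{(k)} = \{x_k = 0\}$, and for a vertex $w$ of $\Gamma$ put $S(w) = \{k : \pi_k(w) > 0\}$ and $Q(w) = [d] \setminus S(w)$; then $|Q(w)|$ is the codimension of the face carrying $w$, and since $\ell_1 = 0$ each $Q(w)$ is nonempty, with $|Q(w)| = 1$ exactly when $w$ lies in the relative interior of a facet. For two vertices $w, w'$ of the simplex $G$ the segment $\Conv{w, w'}$ is automatically an edge of $\Gamma$, and it is an interior edge if and only if $Q(w) \cap Q(w') = \varnothing$. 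Write the vertices of $G$ as $v_1, v_2, u_1, \dots, u_{d-2}$ with $e = \Conv{v_1, v_2}$ the given interior edge, so $Q(v_1) \cap Q(v_2) = \varnothing$, and set $U = S(u_1) \cup \dots \cup S(u_{d-2})$. The opening observation is that the apex of any pyramid structure on $G$ must be $v_1$ or $v_2$, since any proper face of $\Delta$ containing every vertex of $G$ but some $u_j$ would contain $e$, hence be all of $\Delta$; unwinding the definition of pyramid then gives that $G$ is a pyramid if and only if $Q(v_1) \not\subseteq U$ or $Q(v_2) \not\subseteq U$.

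Since $d \geq 4$ and $\ell_1 = \ell_2 = 0$, Theorem~\ref{thm:internaledgegraph} says that $\Sigma(\Gamma)$ is a union of trees, each containing exactly one vertex supported on a face of codimension $\geq 2$. Hence the graph $\Sigma_G$ on the vertices of $G$ whose edges are the interior edges of $G$, a subgraph of $\Sigma(\Gamma)$, is a forest each of whose trees contains at most one vertex $w$ with $|Q(w)| \geq 2$. Because $v_1$ and $v_2$ are joined by $e$ in one tree of $\Sigma_G$, at most one of them has $|Q| \geq 2$; relabeling if needed, $v_2$ lies in the relative interior of a facet $F^{(k_0)}$, so $Q(v_2) = \{k_0\}$, and then $S(v_1) \cup S(v_2) = [d]$ forces $k_0 \in S(v_1)$, i.e.\ $k_0 \notin Q(v_1)$.

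Now suppose $G$ is not a pyramid, so $Q(v_1) \subseteq U$ and $k_0 \in U$. Pick a $u$-vertex $x$ with $k_0 \in S(x)$; then $Q(v_2) \cap Q(x) = \varnothing$, so $\Conv{v_2, x}$ is an interior edge, and since $\Sigma_G$ has no triangle on $\{v_1, v_2, x\}$ the segment $\Conv{v_1, x}$ is not interior, so we may choose $a \in Q(v_1) \cap Q(x)$; note $a \neq k_0$. Pick a $u$-vertex $y$ with $a \in S(y)$ (possible since $a \in Q(v_1) \subseteq U$); then $y \neq x$ because $a \in Q(x)$, and $x, y \notin \{v_1, v_2\}$. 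The crux is to show that $\Conv{x, y}$ and $\Conv{y, v_1}$ are both interior edges, which produces the $4$-cycle $v_1\,v_2\,x\,y$ in the forest $\Sigma_G$, a contradiction. Here the codimension constraint enters: since $v_1, v_2, x$ lie in one tree of $\Sigma_G$ and $|Q(v_2)| = 1$, at most one of $v_1, x$ has $|Q| \geq 2$, so split into the cases $(|Q(v_1)| \geq 2)$, $(|Q(v_1)| = |Q(x)| = 1)$, and $(|Q(x)| \geq 2)$. If $|Q(v_1)| \geq 2$, then $Q(x) = \{a\}$, so $\Conv{x, y}$ is interior; then $y$ joins the tree, so $|Q(y)| = 1$, and the absence of a triangle on $\{v_2, x, y\}$ forces $Q(y) = \{k_0\}$, making $\Conv{y, v_1}$ interior. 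If $|Q(v_1)| = |Q(x)| = 1$, then $Q(v_1) = Q(x) = \{a\}$, so $\Conv{x, y}$ and $\Conv{y, v_1}$ are both interior. If $|Q(x)| \geq 2$, then $Q(v_1) = \{a\}$ gives $\Conv{y, v_1}$ interior, so $y$ joins the tree, $|Q(y)| = 1$; the absence of a triangle on $\{v_1, v_2, y\}$ forces $Q(y) = \{k_0\}$, and $k_0 \in S(x)$ then gives $Q(x) \cap Q(y) = \varnothing$, i.e.\ $\Conv{x, y}$ interior.

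I expect the main obstacle to be precisely this case analysis, and especially the last case, where $x$ rather than $v_1$ is the codimension-$\geq 2$ vertex of its tree: one cannot then read off a singleton $Q$-set for $x$ directly, and must first pin down $Q(y) = \{k_0\}$ through a no-triangle relation and only afterward close the cycle. A secondary point requiring care is the opening reduction: verifying that the apex of any pyramid structure on $G$ lies on $e$, converting ``pyramid'' into the stated condition on $U$, and recording the simple but essential fact that every segment between two vertices of $G$ appearing in the argument is an edge of $\Gamma$, so that ``interior edge'' is exactly the support-disjointness condition on the $Q$-sets.
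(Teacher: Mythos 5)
Your proof is correct and uses essentially the same strategy as the paper's: both arguments derive a contradiction by exhibiting a $4$-cycle $v_1, v_2, x, y$ (the paper's $v_1, v_2, w_1, w_2$) in the component of $\Sigma(\Gamma)$ containing $e$, which must be a tree by Proposition~\ref{prop:components are trees or have cycles}. The main presentational difference is that the paper normalizes \emph{both} endpoints of $e$ to be carried by codimension-one faces via two successive WLOG reductions (swapping $v_2$ with the newly produced neighbour $w_2$), whereas you normalize only $v_2$, define $y$ through a common element $a\in Q(v_1)\cap Q(x)$, and then close the cycle by a three-way case analysis on which of $v_1, x$ has codimension $\ge 2$ support; the explicit support-set bookkeeping $S(\cdot), Q(\cdot)$ and the characterization of the pyramid condition as $Q(v_1)\subseteq U$ and $Q(v_2)\subseteq U$ make your version somewhat longer but entirely equivalent in substance.
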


\begin{proof}
Let $\Gamma$ be a facet of a triangulation of a $(d - 1)$-simplex $\Delta$ with $d \ge 4$, such that $\ell_1= \ell_2 = 0$.  Let $G$ be a facet of $\Gamma$ that contains an interior edge.  We assume that $G$ is not a pyramid, and will derive a contradiction. Let $e_1 = \Conv{v_1, v_2}$ be an interior edge in $G$. Let $C$ be the connected component of the interior edge graph $\Sigma(\Gamma)$ containing $e_1$. 

By Proposition~\ref{prop:components are trees or have cycles}, either $v_1$ or $v_2$ is carried by a codimension 1 face.
Without
loss of generality, $v_1$ is carried by a codimension 1 face $F_1$. Note that $v_2 \notin F_1$. Since $G$ is not a pyramid, it has a vertex $w_2 \notin F_1$ distinct from $v_2$. Then $e_2 = \Conv{v_1, w_2}$ lies in $C$.
By Proposition~\ref{prop:components are trees or have cycles}, either $v_2$ or $w_2$ is carried by a codimension 1 face. Without
loss of generality, $v_2$ is carried by a codimension 1 face $F_2$. If $w_2 \notin F_2$, then $\Conv{v_2, w_2}$
is an interior edge and $C$ contains a cycle, contradicting Proposition~\ref{prop:components are trees or have cycles}. 
Hence, $w_2 \in F_2$. Since $G$ is not a pyramid, it has a vertex $w_1 \notin F_2$ distinct from $v_1$.
Then $e_3 = \Conv{w_1, v_2}$ lies in $C$. If $w_1 \notin F_1$, then $\Conv{v_1, w_1}$
is an interior edge and  $C$ contains a cycle, contradicting Proposition~\ref{prop:components are trees or have cycles}. 
Hence $w_1 \in F_1$. By Proposition~\ref{prop:components are trees or have cycles}, 
either $w_1$ or $w_2$ is carried by a codimension 1 face, meaning that they cannot both 
be contained in some third codimension 1 face $F_3$. Therefore, 
$e_4 = \Conv{w_1, w_2}$ lies in $C$, and $e_1,e_3,e_4,e_2$ forms a 
cycle in $C$, contradicting Proposition~\ref{prop:components are trees or have cycles}.
\end{proof}

\section{Dimensions 2 and 3} \label{sec:dim23}

In this section we prove our main structural results, that all subdivisions with vanishing local $h$ can be obtained by a sequence of conical facet refinements from the trivial subdivision and the triforce subdivison in dimension 2, and from the trivial subdivision in dimension 3.  For both results our proof is by induction on the number of vertices or interior edges in the subdivision.  In the induction step, we identify a subcomplex that arises from a conical subdivision of a facet in a coarser subdivision.

Recall that the support of a subcomplex $\Gamma' \subset \Gamma$ is the union of the faces in $\Gamma'$.  It is a closed subset of $\Delta$.
The relative boundary of any closed subset $F \subset \Delta$ is the intersection of $F$ with the closure of its complement $\Delta \smallsetminus F$.  Note, in particular, that if $F$ is the support of a subcomplex $\Gamma' \subset \Gamma$, then the relative boundary of $F$ is the support of a subcomplex of $\Gamma'$.  When no confusion seems possible, we will refer to this subcomplex as the relative boundary of $\Gamma'$.

\begin{lemma}\label{coarsening-condition}
Let $\Gamma$ be a triangulation of a simplex $\Delta$ and let $\Gamma'\subseteq \Gamma$ be a subcomplex whose support is a simplex $F$ of dimension $d-1$.  Suppose that $\Gamma'$ induces the trivial subdivision on the relative boundary of $F$ in $\Delta$.
Then $\ell(\Gamma; x) - \ell(\Gamma'; x)$ has nonnegative coefficients. 
In particular, if $\ell(\Gamma;x)=0$, then $\ell(\Gamma';x)=0$. 
\end{lemma}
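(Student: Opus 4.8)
\section*{Proof proposal}

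The plan is to exhibit $\Gamma$ as a facet refinement of a coarser triangulation $\tilde\Gamma$ of $\Delta$ obtained by ``forgetting the subdivision of the interior of $F$,'' and then to conclude using Proposition~\ref{additivity of local h} together with Stanley's nonnegativity theorem. Write $G = \overline{\Delta\smallsetminus F}$ and let $\Gamma_G$ be the subcomplex of $\Gamma$ consisting of all faces contained in $G$; one checks that every face of $\Gamma$ lies in $F$ or in $G$, so that $\Gamma = \Gamma' \cup \Gamma_G$ and $\Gamma' \cap \Gamma_G$ is the subcomplex supported on the relative boundary $F \cap G$ of $F$. Define $\tilde\Gamma$ to be the collection of simplices consisting of $F$ itself, all faces of the simplex $F$, and all faces of $\Gamma_G$.

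The first task is to verify that $\tilde\Gamma$ is a geometric triangulation of $\Delta$ having $F$ as a facet, that $\Gamma$ refines $\tilde\Gamma$, and that this refinement is a facet refinement along $F$. The last point is easy once the first two are known: every facet of $\tilde\Gamma$ other than $F$ is a $(d-1)$-simplex of $\Gamma$, hence is not subdivided by $\Gamma$. The substantive point is compatibility along $F \cap G$: the faces of $\Gamma_G$ that meet $F$ must meet it along faces of the simplex $F$. This is exactly where the hypothesis enters---since $\Gamma'$ restricts trivially to the relative boundary of $F$, the subcomplex $\Gamma_G$ restricts on $F \cap G$ to precisely the faces of the simplex $F$ that lie in $F\cap G$, so that $F$ glues consistently onto $\Gamma_G$. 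Note that $\Gamma$, and hence $\Gamma'$, may still nontrivially subdivide the part of $\partial F$ lying in $\partial\Delta$; this is harmless, since any such face of $F$ lies in no facet of $\tilde\Gamma$ other than $F$ itself, so the subdivision is internal to the facet $F$ of $\tilde\Gamma$. (The degenerate case $F = \Delta$, where $G = \varnothing$ and $\tilde\Gamma$ is the trivial subdivision, is consistent with this and requires no separate treatment.)

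Granting this, Proposition~\ref{additivity of local h} applied to the facet refinement $\Gamma$ of $\tilde\Gamma$ along $F$ gives $\ell(\Gamma;x) = \ell(\tilde\Gamma;x) + \ell(\Gamma_F;x) = \ell(\tilde\Gamma;x) + \ell(\Gamma';x)$. Hence $\ell(\Gamma;x) - \ell(\Gamma';x) = \ell(\tilde\Gamma;x)$, which has nonnegative coefficients by Stanley's nonnegativity theorem; and if $\ell(\Gamma;x) = 0$ then $\ell(\tilde\Gamma;x)$ and $\ell(\Gamma';x)$ are nonnegative polynomials summing to zero, so both vanish. I expect the only real obstacle to be the bookkeeping in the previous paragraph: pinning down the point-set topology of the relative boundary of $F$ in $\Delta$ and confirming that, because $\Gamma$ restricts trivially there, the simplex $F$ really does glue onto the surrounding cells of $\Gamma_G$ to form a geometric simplicial subdivision of $\Delta$. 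Everything after that is a direct appeal to results already established.
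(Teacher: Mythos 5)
Your proposal is correct and follows essentially the same route as the paper: construct the coarsened subdivision of $\Delta$ by replacing the subcomplex $\Gamma'$ with the single simplex $F$ (the paper calls it $\Gamma''$, you call it $\tilde\Gamma$), observe that $\Gamma$ is then a facet refinement along $F$, apply Proposition~\ref{additivity of local h}, and conclude via Stanley's nonnegativity theorem. The extra paragraph you include, verifying that $\tilde\Gamma$ really is a geometric triangulation — in particular that faces of $\Gamma_G$ meeting $F$ do so only along faces of the simplex $F$, and that any nontrivial subdivision of $\partial F \cap \partial\Delta$ away from the relative boundary is harmless — is exactly the content the paper leaves implicit in the phrase ``we get a subdivision $\Gamma''$ of $\Delta$ by replacing $\Gamma'$ with $F$ itself,'' so you have simply made the gluing check explicit rather than departed from the argument.
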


\begin{proof}
Since $\Gamma'$ does not nontrivially subdivide the relative boundary of $F$ in $\Delta$, we get a subdivision $\Gamma''$ of $\Delta$ by replacing $\Gamma'$ with $F$ itself. Note that $\Gamma$ is then a facet refinement of $\Gamma''$ along $F$, so by Proposition~\ref{additivity of local h},
we have $\ell(\Gamma;x)=\ell(\Gamma'';x)+\ell(\Gamma';x)$, and the lemma follows.
\end{proof}

\begin{remark}
Note that, in the proof of the lemma, $\Gamma$ is a conical facet refinement of $\Gamma''$  (resp. obtained from $\Gamma''$ by a sequence of conical facet refinements) if and only $\Gamma'$ is a conical facet refinement of $F$ (resp. obtained from the trivial subdivision of $F$ by a sequence of conical facet refinements).
\end{remark}

\subsection{Dimension 2}
We now apply the lemma to prove Theorem~\ref{thm:dim2}, which says that any triangulation of a 2-dimensional simplex with vanishing local $h$-polynomial is obtained from either the trivial subdivision or the triforce by a sequence of conical facet refinements.  

\begin{proof}[Proof of Theorem~\ref{thm:dim2}]
If $\Gamma$ has no interior edges, then it is the trivial subdivision; this case of the theorem is obvious.  We proceed by induction on the number of interior edges, and consider two cases, according to the possible structures of the internal edge graph given by  Proposition~\ref{prop:components are trees or have cycles}.  (Recall also that the internal edge graph is connected, by Remark~\ref{rem:connected}.)

\proofsection{Case 1:}{$\Sigma(\Gamma)$ is a tree that contains exactly one vertex of excess zero}  We claim that $\Gamma$ is obtained from the trivial subdivision of $\Delta$ by a sequence of conical facet refinements.  To see this, note that any interior edge that contains a vertex of excess zero divides $\Gamma$ into two triangular subcomplexes whose relative boundary is not subdivided, as shown.
\begin{center}
\begin{tikzpicture}
\filldraw (0,0) circle (.6pt) -- (.5,0) circle (.6pt)-- (1,0)circle (.6pt) -- (.5,.8)circle (.6pt) -- (0,0);
\filldraw (.5,0)circle (.6pt) -- (.5,.8) ;
\node at (.5,-.2) {};
\node at (.5, 1) {};
\end{tikzpicture}
\end{center}
These two subcomplexes have vanishing local $h$-polynomials, by Lemma~\ref{coarsening-condition}. By induction, each is obtained from the trivial subdivision by a sequence of conical facet refinements.  Hence $\Gamma$ is obtained from the subdivision into two triangles by a sequence of conical facet refinements.  The subdivision into two triangles is itself a conical facet refinement of the trivial subdivision, and we conclude that $\Gamma$ is obtained from the trivial subdivision by a sequence of conical facet refinements, as claimed.

\proofsection{Case 2:}{$\Sigma(\Gamma)$ contains a simple 3-cycle and all of its vertices have excess 1} We claim that $\Gamma$ is obtained from the triforce by a sequence of conical facet refinements.  To see this, note that the three vertices of the simple 3-cycle must be carried by the three sides of $\Delta$.  This splits $\Gamma$ into four 
triangular subcomplexes, each with unsubdivided relative boundary, i.e., $\Gamma$ is a refinement of the triforce, and the induced subdivision on the relative boundary of each of the four triangles is trivial.  Hence, by Lemma \ref{coarsening-condition}, each of the four induced subdivisions has vanishing local $h$-polynomial.  In fact, the induced subdivision  of the interior triangle must be trivial (because $\Gamma$ has no interior vertices).  In each of the other three triangles, the induced subdivision cannot contain a cycle of interior edges, and hence the induced subdivisions are obtained from the trivial subdivision by a sequence of conical facet refinements.  We conclude that $\Gamma$ is obtained from the triforce by a sequence of conical facet refinements, as claimed.
\end{proof}

\subsection{Dimension 3}

We now apply similar inductive arguments to prove Theorem~\ref{thm:dim3}, which says that any subdivision $\Gamma$ of a 3-dimensional simplex $\Delta$ with vanishing local $h$-polynomial is obtained from the trivial subdivision by a sequence of conical facet refinements.

\begin{proof}[Proof of Theorem~\ref{thm:dim3}]
If $\Gamma$ has only 4 vertices, then it is the trivial subdivision and the conclusion is obvious.  We proceed by induction on the number of vertices.  By Proposition~\ref{prop:components are trees or have cycles}, each connected component of the internal edge graph $\Sigma(\Gamma)$ is a tree with exactly one vertex of excess less than 2. 

\proofsection{Case 1:}{$\Sigma(\Gamma)$ is empty}

Let $F$ be a 2-face of $\Delta$.  We claim that $\ell(\Gamma_F;x) = 0$.  To see this, note that any vertex carried by $F$ must be contained in an interior edge. Hence $\Gamma_F$ has no interior vertices, and $\ell_1(\Gamma_F;x) = 0$.  Now $\Gamma_F$ is a triangulation of the 2-simplex, so the symmetry of the local $h$-polynomial implies that $\ell_i(\Gamma_G;x) = \ell_{3-i}(\Gamma_F;x)$. Hence $\ell_2(\Gamma_F;x) = \ell_1(\Gamma_F;x) = 0$, and we conclude that $\ell(\Gamma_F;x) = 0$, as claimed.

\medskip

We proceed to consider three subcases, according to the internal edge graphs of the restrictions of $\Gamma$ to the 2-faces of $\Delta$.

\proofsection{Subcase 1.1:}{All 2-faces have empty internal edge graph}

Any interior 2-face of $\Gamma$ contains an edge that is carried by $\Gamma_F$ for some 2-face $F \leq \Delta$.  So $\Gamma$ has no interior 2-faces, and hence is the trivial subdivision. 

\proofsection{Subcase 1.2:}{Some 2-face has a 3-cycle in its internal edge graph}

Let $F_1$ be a 2-face of $\Delta$ such that $\Gamma_{F_1}$ contains a 3-cycle of interior edges. 
The vertices of this 3-cycle, $v_2$, $v_3$, and 
$v_4$, are carried by distinct edges of $F_1$.
Label the other 2-faces of $\Delta$ as $F_2,F_3,F_4$ so that $v_i$ is carried by $F_1\cap F_i$. 

The 2-face $\Conv{v_2,v_3,v_4}$ is contained in some 3-face $T=\Conv{v_2,v_3,v_4,x}$ of $\Gamma$. Since $\Sigma(\Gamma)$
is empty, the edges $\Conv{x,v_i}$ must be contained in the boundary of $\Delta$. Hence $x$ must lie in the intersection $F_2\cap F_3\cap F_4$, which is the vertex of $\Delta$ opposite $F_1$. Then the 2-faces $\Conv{v_1,v_2,x}$, 
$\Conv{v_2,v_3,x}$ and $\Conv{v_1,v_3,x}$ cut $\Delta$ into four tetrahedral regions (the cone over a triforce subdivision of $F_1$).  By Lemma \ref{coarsening-condition}, the restriction of $\Gamma$ to each of these regions has vanishing local $h$-polynomial.  By induction, the induced subdivision of each of these regions is obtained from the trivial subdivision by a sequence of conical facet refinements, and we are done.

\proofsection{Subcase 1.3:}{No 2-face has a 3-cycle, and some 2-face has an interior edge}

Let $F$ be a $2$-face of $\Delta$ such that $\Gamma_F$ contains an interior edge.  By Proposition~\ref{prop:components are trees or have cycles}, there is an edge $e$ carried by $F$ with a vertex of excess 0.  Label the vertices of $\Delta$ as $A$, $B$, $C$, and $D$, so that $F = \Conv{B,C,D}$ and $e$ contains $D$.  Let $E \in \Conv{B,C}$ be the second vertex of $e$.

Let $G$ be an interior 2-face of $\Gamma$ containing $e$. Then $G=\Conv{e,v}$ for some vertex $v$ of $\Gamma$. This vertex $v$ must lie on $\Conv{A,B}$ or $\Conv{A,C}$, so that the edges $\Conv{E,v}$ and $\Conv{D,v}$ are not interior.  Furthermore, $v\ne B,C$, since $G$ is interior.  Then either $v=A$, or, without loss of generality, we may assume $\sigma(v)=\Conv{A,C}$.  These possibilities are illustrated in Figure~\ref{fig:twocases}.

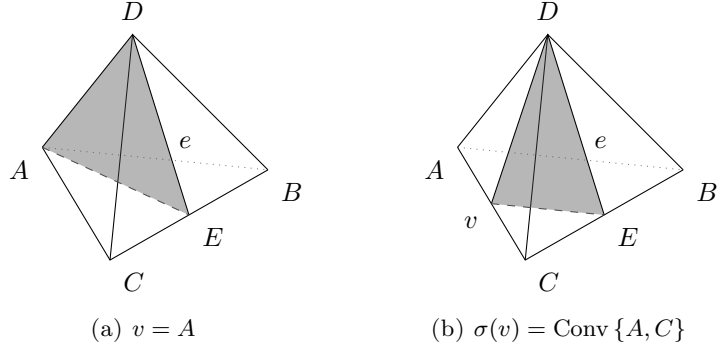
\begin{figure}[h!] 
\centering
\hfill
\begin{subfigure}{0.3\textwidth}
\centering
\begin{tikzpicture}[scale=3]
\coordinate (A) at (0,0.5); \node[below left=0.5ex of A] (Alab) {$A$};
\coordinate (B) at (1,0.4); \node[below right=0.5ex of B] (Blab) {$B$};
\coordinate (C) at (0.3,0); \node[below right=0.5ex of C] (Clab) {$C$};
\coordinate (D) at (0.4,1); \node[above=0.5ex of D] (Dlab) {$D$};
\path (B) -- (C) coordinate[midway] (E);
\node[below right=0.5ex of E] (Elab) {$E$};
\draw[dotted,color=white!50!black]
(A)
-- (B);
\draw
(A)
-- (C)
-- (B);
\draw 
(B)
-- (D)
-- (C);
\fill[color=white!30!black,opacity=0.4]
(A)
-- (E)
-- (D);
\draw 
(A)
-- (D);
\draw
(E)
-- (D) coordinate[pos=0.4] (ePos);
\draw[dashed,color=white!30!black]
(A)
-- (E);
\node[right=0.3ex of ePos] (e) {$e$};
\end{tikzpicture}
\caption{$v=A$}
\label{fig:v-equals-a}
\end{subfigure}
\hfill 
\begin{subfigure}{0.3\textwidth}
\centering
\begin{tikzpicture}[scale=3]
\coordinate (A) at (0,0.5); \node[below left=0.5ex of A] (Alab) {$A$};
\coordinate (B) at (1,0.4); \node[below right=0.5ex of B] (Blab) {$B$};
\coordinate (C) at (0.3,0); \node[below right=0.5ex of C] (Clab) {$C$};
\coordinate (D) at (0.4,1); \node[above=0.5ex of D] (Dlab) {$D$};
\path (B) -- (C) coordinate[midway] (E);
\node[below right=0.5ex of E] (Elab) {$E$};
\path (A) -- (C) coordinate[pos=0.5] (V1);
\node[below left=0.5ex of V1] (V1lab) {$v$};
\draw[dotted,color=white!50!black]
(A)
-- (B);
\draw
(A)
-- (C)
-- (B);
\draw 
(B)
-- (D)
-- (C);
\fill[color=white!30!black,opacity=0.4]
(V1)
-- (E)
-- (D);
\draw
(V1)
--(D);
\draw 
(A)
-- (D);
\draw
(E)
-- (D) coordinate[pos=0.4] (ePos);
\draw[dashed,color=white!30!black]
(V1)
-- (E);
\node[right=0.3ex of ePos] (e) {$e$};
\end{tikzpicture}
\caption{$\sigma(v)=\Conv{A,C}$}
\label{fig:v-not-equal-a}
\end{subfigure}
\hfill {}
\caption{Possible locations for $v$ in Subcase 1.3}
\label{fig:loc-for-v}
\label{fig:twocases}
\end{figure}

If $v = A$ then $G$, marked in grey in Figure \ref{fig:v-equals-a}, divides $\Delta$ into two distinct tetrahedra, that satisfy the conditions of the Lemma \ref{coarsening-condition}, and by the usual induction argument we are done.

We may therefore assume $\sigma(v) = \Conv{A,C}$ as in Figure \ref{fig:v-not-equal-a}. In this case, $G$ cuts off a tetrahedron $\Conv{v,C,D,E}$, which satisfies the conditions of Lemma \ref{coarsening-condition}. By induction, it suffices to consider the case where this tetrahedron is trivially subdivided. Also since $G$ is an interior 2-face, it must be contained in another tetrahedron $\Conv{v,D,E,w}$ of $\Gamma$. We must have $w\in \Conv{v,A}\cup\Conv{A,B}\cup \Conv{E,B}\setminus\set{v,E}$, in order to prevent the edges $\Conv{w,v}$ and $\Conv{w,E}$ from being interior, and so that $w$ is on the other side of $G$ from $C$.

Now if $\sigma(w)=\Conv{A,B}$, then $\Gamma_{\Conv{A,B,C}}$ will contain a cycle of interior edges, with vertices $v$, $E$, $w$. But we assumed that no 2-face has a cycle in its internal edge graph. Thus, $w\in\Conv{v,A}\setminus \set{v}$ or $w\in\Conv{E,B}\setminus\set{E}$. Without loss of generality, we may assume $w\in\Conv{E,B}\setminus\set{E}$. Then regardless of whether $w=B$ or is instead
in the interior of the segment, the tetrahedron $\Conv{v,D,w,C}$ is one to which Lemma \ref{coarsening-condition} applies. Moreover, $\Gamma_{\Conv{v,D,w,C}}$ is nontrivial, because $G$ lies in the interior. Hence $\Gamma_{\Conv{v,D,w,C}}$ is obtained from the trivial subdivision by a series of conical facet refinements, and the conclusion follows by induction.

\bigskip

\proofsection{Case 2:}{$\Sigma(\Gamma)$ is nonempty}

By Proposition~\ref{prop:components are trees or have cycles}, we know
that each connected component of $\Sigma(\Gamma)$ is a tree rooted at a unique vertex of excess less than $2$. We consider one of these components $C$ and let $v_1$ be one of its leaves at maximal distance from the root. Let $w$ be the unique vertex adjacent to $v_1$ in $C$, and let $F$ be the 2-face carrying $v_1$. Let $v_2, \cdots, v_r$ be the other leaves that are adjacent to $w$ and carried by $F$, as in Figure \ref{fig:possible-sigma}. 

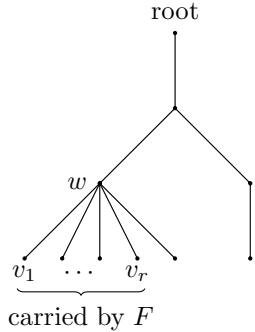
\begin{figure}[h!]
\centering
\begin{tikzpicture}
\filldraw (0,0) circle (.6pt)-- (0,-1)circle (.6pt) -- (-1,-2) circle (.6pt)-- (-2, -3)circle (.6pt);
\filldraw (-1.5, -3) circle (.6pt) -- (-1, -2) circle (.6pt)-- (-1, -3)circle (.6pt);
\filldraw (-.5, -3) circle (.6pt)--( -1, -2);
\filldraw (0,-1) -- (1, -2) circle (.6pt) -- (1, -3)circle (.6pt);
\filldraw (-1,-2) -- (0,-3) circle (.6 pt);
\node at (0, .3) {root};
\node at (-1.3, -2) {$w$};
\node at (-2, -3.2) {$v_1$};
\node at (-1.25, -3.2) {$\cdots$};
\node at (-.5, -3.2) {$v_r$};
\draw[decoration={brace},decorate](-.4,-3.4) -- (-2.1,-3.4) ;
\node at (-1.25,-3.8) {carried by $F$};
\end{tikzpicture}
\caption{Structure of $C$.}
\label{fig:possible-sigma}
\end{figure}

Let $X$ be the subcomplex of $\Gamma_F$ that is the union of the closed stars of $v_1, \ldots, v_r$. 

\proofsection{Claim:} {$X \subseteq \Cl(\Star(w))$.}

In other words, we claim that $\Gamma$ contains, as a subcomplex, the cone over $X$ with vertex $w$. To prove this, it suffices to show that if $G$ is a $k$-face of $\Gamma_F$ containing some vertex $v_i$, then $G$ is contained in a face of $\Star(w)$. Any such $G$ must be contained in an interior $(k+1)$-face of $\Gamma$.  Let $w'$ be a vertex such that $\Conv{G,w'}$ is an interior $(k+1)$-face. Note that $\Conv{v_i,w'}$ is an interior edge. Since $v_i$ is a leaf of the internal edge graph, we have $w'=w$.  Hence $\Conv{G,w}$ is a face of $\Star(w)$ that contains $G$.  This proves the claim.

\proofsection{Subcase 2.1:} {There is a vertex of $X$ carried by $F$ that is not in $\{ v_1, \ldots, v_r \}$}

Let $v$ be a vertex of $X$ that is carried by $F$ and not contained in $\{v_1, \ldots, v_r\}$. Since $v\in X$, there is an edge $\Conv{v,v_i}$ in $X$ for some index $i$. By the claim proved above, $\Conv{v,v_i,w}$ is a face of $\Gamma$, and $\Conv{v,w}$ is an edge of $C$. Since $v \not \in \{v_1, \ldots, v_r\}$, it must be the parent of $w$. 

Consider the link $\lk_{\Gamma_F}(v)$.  This is a cycle of edges in $\Gamma_F$ that contains $v_i$.  If every edge in $\lk_{\Gamma_F}(v)$ contains some $v_j \in \{ v_1, \ldots, v_r \}$, then $\Star(v)$ is contained in $\Cl(\Star(w))$ and hence $\Conv{v,w}$ is the unique interior edge containing $v$.  This contradicts the fact that $v$ is the parent of $w$.  Hence $\lk_{\Gamma_F}(v)$ contains multiple vertices not in $\{v_1, \ldots, v_r \}$.

Choose vertices $w_1$ and $w_2$ in $\lk_{\Gamma_F}(v)$ that are not in $\{v_1, \ldots, v_r\}$ so that every vertex in the path between $w_1$ and $w_2$ that contains $v_i$ is in $\{v_1, \ldots, v_r\}$. Note that, since $w_1$ and $w_2$ are adjacent to $w$ and not in $\{v_1, \ldots, v_r \}$, and since $v$ is the parent of $w$ in $C$, the edges $\Conv{w,w_i}$ must not be interior.  Similarly, $w$ cannot be the root of $C$. Hence $w$ is carried by a $2$-face that contains both $w_1$ and $w_2$. 

It follows that $w_1$ and $w_2$ lie on an edge of $F$. Thus the subcomplex with support $\Conv{v,w_1,w_2,w}$ satisfies the conditions of Lemma \ref{coarsening-condition}. 

\proofsection{Subcase 2.1.1:}{$\Conv{v,w_1,w_2,w}$ is trivially subdivided}
We claim that this is impossible.  Indeed, if $\Conv{v,w_1,w_2,w}$ is trivially subdivided, then $\lk_{\Gamma_F}(v)$ consists of the edge $\Conv{w_1, w_2}$ together with a sequence of edges connecting $w_1$ to $w_2$ in which all other vertices are contained in $\{ v_1, \ldots, v_r \}$.  Then every facet containing $v$ is a pyramid with base on $F$ and vertex at $w$.  Hence, $v$ is a leaf of $C$.  This is impossible, since we have already shown that $v$ is the parent of $w$. This proves the claim.

\proofsection{Subcase 2.1.2:}{$\Conv{v,w_1,w_2,w}$ is nontrivially subdivided}
In this case, by the induction hypothesis, $\Gamma_{\Conv{v,w_1,w_2,w}}$ is obtained from the trivial subdivision by a series of conical facet refinements, and the result follows by induction.

\proofsection{Subcase 2.2:}{Every vertex of $X$ carried by $F$ is in $\{v_1, \ldots, v_r\}$}

In this case, every edge in the boundary of $X$ connects two vertices that are in the boundary of $F$.  Then the closure $X'$ of any connected component of the relative interior of $X$ is a subcomplex with support a polygon whose relative boundary in $F$ is trivially subdivided. If the support of $X'$ is a triangle, then we can apply Lemma~\ref{coarsening-condition} for the subcomplex of $\Gamma$ given by the cone over $X'$ with vertex $w$, and be done by induction.

If $X'$ is not a triangle, then we show that $X'$ is contained in a larger subcomplex $Y \subset \Gamma_F$ such that 
\begin{enumerate}
\item The relative boundary of $Y$ in $F$ is trivially subdivided.
\item The cone over $Y$ with vertex $w$ is a subcomplex of $\Gamma$.
\item The support of $Y$ is a triangle.
\end{enumerate}
Once we find such a $Y$, then the conclusion of the theorem follows by Lemma~\ref{coarsening-condition} and induction on the number of vertices.

In order to find $Y$ as above, we begin with $Y_0=X'$, and construct an increasing sequence of subcomplexes
\[
Y_0 \subset Y_1 \subset \cdots \subset Y_n = Y
\] 
such that each $Y_i$ has properties $(1)$ and $(2)$, and the support of $Y$ is a triangle.

We describe the construction of $Y_{i+1}$ from $Y_{i}$ in terms of corner cutting edges.

\begin{definition}
A boundary edge $e$ of $Y_i$ is \emph{corner cutting} if its vertices are carried by edges of $F$, and $Y_i$ lies on the side of $e$ opposite from the corner that it cuts off.
\end{definition}

In our construction, if $Y_i$ is not a triangle then $Y_{i+1}$ has strictly fewer corner cutting edges than $Y_i$.  Since $Y_0$ has at most $3$ corner cutting edges, the procedure will terminate for some $n \leq 3$.

\begin{figure}[h!]
\begin{center}
\begin{tikzpicture}[scale = 3]
\filldraw [color=white!70!black] [decorate, decoration = {snake}] (.4,0) to [out = 110, in = 220] (.65,.56);
\filldraw[color=white!70!black] (.4,0) -- (.6,0) -- (.8, 0.32) -- (.65, .56) -- (.4, 0) ;
\node at (.6, .24) {$Y_i$};
\draw (0,0)  -- (1,0) circle (.6pt) -- (.5,.8) -- (0,0);
\filldraw (1,0) circle (.6pt);
\filldraw (.6,0) circle (.6pt) -- (.8, 0.32) circle (.6pt) ;
\node at (1.1,-.2) {$v$};
\node at (.6, -.2) {$w_2$};
\node at (.9, .4) {$w_1$};
\node at (.76, .1) {$e$};
\end{tikzpicture}
\end{center}
\caption{A corner cutting edge $e$ with vertices $w_1$ and $w_2$}
\label{fig:ccedge}
\end{figure}
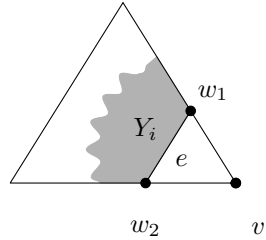

\proofsection{Subcase 2.2.1:}{$Y_i$ has no corner cutting edges} In this case, we claim that the support of $Y_i$ is triangular.  First, if $Y_i$ has no boundary edges, then its support is $F$. Suppose $Y_i$ has some boundary edge $e$. If both vertices of $e$ are carried by edges, then $Y_i$ is the on the same side as the vertex of $F$ cut off by $e$.  Any other boundary edge of $Y_i$ would have to be corner cutting.  Hence, the support of $Y_i$ is the triangle cut off by $e$, and we are done.

Otherwise, $e$ contains a vertex of $F$, and hence divides $F$ into two triangles.  Once again, any other boundary edge would have to be corner cutting, so the support of $Y_i$ is a triangle.  This proves the claim, and completes this subcase.

\bigskip

It remains to consider the situation where $Y_i$ has a corner cutting edge $e$, with vertices $w_1$ and $w_2$. Let $v$ be the vertex of $F$ cut off by $e$, as in Figure \ref{fig:ccedge}.  We consider two further subcases, according to whether or not $w$ is the root of $C$, as illustrated in Figure~\ref{fig:locations-for-w}.

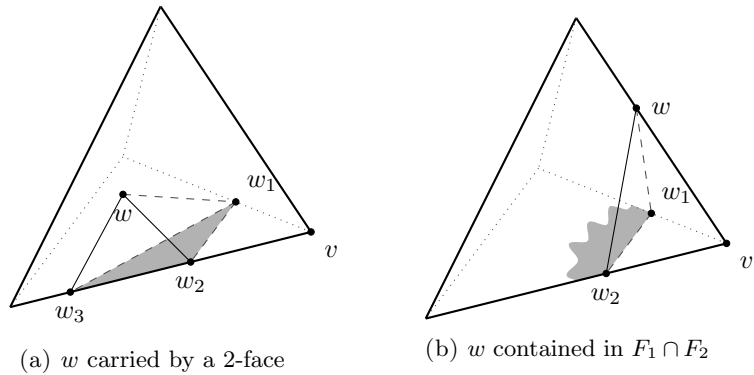
\begin{figure}[h!]
\begin{center}
\hfill
\begin{subfigure}{0.3\textwidth}
 \begin{tikzpicture}[scale=2]
 \coordinate (v_0) at (1.5,0); 
 \coordinate (v_a) at (.25,.5); 
 \coordinate (v_b) at (-.5,-.5); 
 \coordinate (top) at (.5, 1.5);
 \coordinate (w) at (0.25,0.25);
 
\path  (v_0) -- (v_a) coordinate[pos=.4] (w_1);
\path (v_0) -- (v_b) coordinate[pos=.4] (w_2);
\path (v_0) --(v_b) coordinate [pos=.8] (w_3);

\fill [white!70!black] (w_2)--(w_3)--(w_1) --(w_2);

\draw[dotted,color=white!30!black] (v_0) -- (v_a) ;
\draw  [thick] (v_0) -- (v_b) ;
\draw[dashed,color=white!30!black] (w_1) -- (w_2);
\draw[dotted,color=white!30!black] (v_a) -- (v_b);
\draw [thick] (top) -- (v_0);
\draw  [thick] (top) -- (v_b);
\draw[dotted,color=white!30!black] (top) -- (v_a);
\draw[dashed,color=white!30!black] (w) -- (w_1);
\draw (w) -- (w_3);
\draw[dashed,color=white!30!black] (w_1) -- (w_3);
\draw (w) -- (w_2);

\filldraw (v_0) circle (.6pt);
\filldraw (w_1) circle (.6pt);
\filldraw (w_2) circle (.6pt);
\filldraw (w) circle (.6pt);
\filldraw (w_3) circle (.6pt);

\node[below right=0.5ex of v_0]  {$v$};
\node[above right  =0.2ex of w_1]  {$w_1$};
\node[below =0.5ex of w_2]  {$w_2$};
\node[below =0.5ex of w]  {$w$};
\node[below =0.5ex of w_3]  {$w_3$};
\end{tikzpicture}
\caption{$w$ carried by a $2$-face}
\label{subfig:w3triangle}
\end{subfigure}
\hfill
\begin{subfigure}{0.3\textwidth}
 \begin{tikzpicture}[scale=2]
 \coordinate (v_0) at (1.5,0); 
 \coordinate (v_a) at (.25,.5); 
 \coordinate (v_b) at (-.5,-.5); 
 \coordinate (top) at (.5, 1.5);
 
\path  (v_0) -- (v_a) coordinate[pos=.4] (w_1);
\path  (v_0) -- (v_a) coordinate[pos=.5] (z_1);
\path (v_0) -- (v_b) coordinate[pos=.4] (w_2);
\path (v_0) -- (v_b) coordinate[pos=.5] (z_2);
\path (v_0) -- (top) coordinate[pos=.6] (w);

\filldraw [color=white!70!black] [decorate, decoration = {snake}] (z_2) to [out = 110, in = 220] (z_1);
\filldraw [color=white!70!black] (w_1) to (z_1) to  (z_2) to (w_2);

\draw[dotted,color=white!30!black] (v_0) -- (v_a) ;
\draw  [thick] (v_0) -- (v_b) ;
\draw[dashed,color=white!30!black] (w_1) -- (w_2);
\draw[dotted,color=white!30!black] (v_a) -- (v_b);
\draw [thick] (top) -- (v_0);
\draw  [thick] (top) -- (v_b);
\draw[dotted,color=white!30!black] (top) -- (v_a);
\draw[dashed,color=white!30!black] (w) -- (w_1);
\draw (w) -- (w_2);

\filldraw (v_0) circle (.6pt);
\filldraw (w_1) circle (.6pt);
\filldraw (w_2) circle (.6pt);
\filldraw (w) circle (.6pt);

\node[below right=0.5ex of v_0]  {$v$};
\node[above right  =0.1ex of w_1]  {$w_1$};
\node[below =0.5ex of w_2]  {$w_2$};
\node[right =0.5ex of w]  {$w$};

\end{tikzpicture}
\caption{$w$ contained in $F_1\cap F_2$}
\label{subfig:w-on-edge}
\end{subfigure}
\hfill{}
\caption{Possible locations for $w$, with the region $Y_i$ shaded}
\label{fig:locations-for-w}
\end{center}
\end{figure}

\proofsection{Subcase 2.2.2:}{$Y_i$ has a corner cutting edge, and $w$ is not the root of $C$}

If $w$ is not the root, then it is carried by some 2-face of $\Delta$ other than $F$. It cannot be carried by the face opposite $v$, since then both of the edges $\Conv{w_1,w}$ and $\Conv{w_2,w}$ would be interior, meaning that $w_1$ and $w_2$ would both be low excess vertices on the same component of the internal edge graph. Therefore without loss of generality, we can assume that $w$ is carried by the 2-face $F' \neq F$ containing $w_2$.

Note that the support of $Y_i$ is a polygon, and if we now think of its boundary not in $F$, but in the plane containing $F$, there are exactly two vertices of the boundary adjacent to $w_1$; one of these is $w_2$, and the other is some vertex, which we call $w_3$.

The edge $\Conv{w,w_3}$ cannot be interior, since $w_3$ is contained in the boundary of $F$, and hence has low excess, but $w_1$ is the root of $C$. Thus $w_3 \in F\cap F'$, as shown in Figure \ref{subfig:w3triangle}. It follows that the polygon $Y_i$ both contains and is contained in the triangle bounded by $\Conv{w_1, w_2}$,  $\Conv{w_2, w_3}$, and $\Conv{w_1, w_3}$.  Therefore, $Y_i$ is the triangle $\Conv{w_1, w_2, w_3}$, and we are done.

\proofsection{Subcase 2.2.3:}{$Y_i$ has a corner cutting edge, and $w$ is the root of $C$}

In this case, $w$ cannot have an interior edge to either $w_1$ or $w_2$, since $w_1$ and $w_2$ both have low excess, and $w$ is the root of $C$. Therefore, if $F_1$ and $F_2$ are the faces other than $F$ containing $w_1$ and $w_2$ respectively, then $w$ must lie on the edge $F_1\cap F_2$, possibly at the vertex, though this will not matter. This situation is illustrated in Figure \ref{subfig:w-on-edge}. Since $\Conv{w,w_1,w_2}$ is in fact a 2-face of $\Gamma$, the subcomplex with support $\Conv{w,w_1,w_2,v}$ satisfies the hypotheses of Lemma \ref{coarsening-condition}. Thus we may assume that it is trivially subdivided. We then set $Y_{i+1}$ to be the union of $Y_i$ with the triangle $\Conv{w_1, w_2, v}$.  Then $Y_{i+1}$ satisfies properties (1) and (2), and has fewer corner cutting edges than $Y_i$.  Repeating this procedure, as $Y_0$ has at most three corner cutting edges, we eventually arrive at $Y_n$ satisfying properties (1)-(3), and the theorem follows.
\end{proof}

\section{Higher dimensions} \label{sec:higherdim}

While Theorem~\ref{thm:internaledgegraph}, which is an immediate consequence of Proposition~\ref{prop:components are trees or have cycles}, says that the structure of the internal edge graph is essentially the same in higher dimensions, it seems that this is not sufficient for a useful classification of triangulations with vanishing local $h$-polynomial.  Indeed, there are infinitely many different triangulations of the 4-simplex with empty internal edge graph.  For instance, the join of the triforce with an arbitrary subdivision of the 1-simplex has this property.  These triangulations give rise to the following negative result.

\begin{proposition}
There is no finite collection of triangulations of the $4$-simplex with vanishing local $h$-polynomials from which all others can be obtained by a series of conical facet refinements.
\end{proposition}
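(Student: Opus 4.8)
The plan is to find a monovariant that cannot increase under conical facet refinements, but which is unbounded on the family obtained by joining the triforce with subdivisions of the $1$-simplex. The monovariant will be the number of facets that are \emph{not} pyramids.

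First I would establish the monotonicity. Suppose $\Gamma'$ is a conical facet refinement of $\Gamma$ along a facet $G$. The facets of $\Gamma'$ are the facets of $\Gamma$ other than $G$, together with the facets of $\Gamma'_G$, and by Remark~\ref{rem:pyramids} every facet of $\Gamma'_G$ is a pyramid. Hence any facet of $\Gamma'$ that is not a pyramid is one of the old facets of $\Gamma$, and since the property of being a pyramid depends only on the simplex and its position in $\Delta$, such a facet is non-pyramidal in $\Gamma$ as well. Iterating, along any sequence of conical facet refinements $\Gamma_0 \to \cdots \to \Gamma_k$ the set of non-pyramidal facets of $\Gamma_k$ is contained in that of $\Gamma_0$; in particular the number of non-pyramidal facets does not grow (a length-$0$ sequence is covered trivially). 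So if there were a finite collection $\mathcal{C}$ of triangulations of the $4$-simplex from which every triangulation of the $4$-simplex with vanishing local $h$-polynomial is obtained by conical facet refinements, then every such triangulation would have at most $M$ non-pyramidal facets, where $M$ is the finite maximum, over $\Gamma_0 \in \mathcal{C}$, of the number of non-pyramidal facets of $\Gamma_0$.

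Next I would exhibit triangulations with vanishing local $h$-polynomial and arbitrarily many non-pyramidal facets. Let $\Gamma'_n$ be the subdivision of the $1$-simplex into $n$ segments, let $T$ be the triforce triangulation of the $2$-simplex, and set $\Gamma_n := T * \Gamma'_n$, a triangulation of the $4$-simplex. By \eqref{eq:join}, $\ell(\Gamma_n;x) = \ell(T;x)\,\ell(\Gamma'_n;x) = 0$, since $\ell(T;x)=0$. Write $\Delta = \Conv{e_1,\dots,e_5}$ with the $2$-simplex spanned by $e_1,e_2,e_3$ and the $1$-simplex by $e_4,e_5$. The facets of $\Gamma_n$ are the joins $S * e$ of a triangle $S$ of the triforce with a segment $e$ of $\Gamma'_n$. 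Taking $S = S_0$ to be the central triangle of the triforce, whose vertices are carried by $\Conv{e_1,e_2}$, $\Conv{e_1,e_3}$, $\Conv{e_2,e_3}$, and $e$ to be any of the $n-2$ segments of $\Gamma'_n$ with both endpoints in the relative interior of the $1$-simplex (hence carried by $\Conv{e_4,e_5}$), I claim $S_0 * e$ is not a pyramid. Indeed, a proper face $\Conv{S}$ of $\Delta$ containing all but one vertex of $S_0 * e$ would contain the carriers of at least two of the three central vertices of $S_0$, forcing $e_1,e_2,e_3 \in S$, and also the carrier of at least one endpoint of $e$, forcing $e_4,e_5 \in S$; so $\Conv{S}$ would be all of $\Delta$, a contradiction. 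Thus $\Gamma_n$ has at least $n-2$ non-pyramidal facets, and choosing $n > M+2$ contradicts the bound of the previous paragraph.

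The conceptual step — recognizing that the number of non-pyramidal facets is a monovariant for conical facet refinements — is the heart of the argument; once it is in place, the monotonicity is a one-line consequence of Remark~\ref{rem:pyramids}, and the family $T * \Gamma'_n$ is exactly the one flagged in the discussion above. The main thing to verify carefully is the last claim, that $S_0 * e$ is genuinely non-pyramidal for every interior segment $e$ — the short carrier computation indicated — since this is where the argument could go wrong if the segments were allowed to meet the vertices of the $1$-simplex.
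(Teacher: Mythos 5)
Your proposal is correct and follows essentially the same route as the paper: both use the family obtained by joining the triforce with subdivisions of the $1$-simplex (vanishing local $h$ via \eqref{eq:join}), observe via Remark~\ref{rem:pyramids} that conical facet refinements only introduce pyramid facets, so the number of non-pyramid facets cannot increase, and then note that the join of the central triforce triangle with a segment having both endpoints interior is not a pyramid, giving arbitrarily many non-pyramid facets. Your write-up merely makes explicit the monovariant argument and the carrier computation that the paper leaves implicit.
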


\begin{proof}
Let $\Gamma_n$ be the subdivision of the $1$-simplex with $n$ interior vertices, and let $\Gamma'$ be the triforce.  If $e$ is an edge of $\Gamma_n$ such that both endpoints are interior, then the join of $e$ with the center facet of $\Gamma'$ is not a pyramid.  Hence $\Gamma_n * \Gamma'$ is a triangulation with vanishing local $h$-polynomial that has $n-1$ non-pyramid facets.  The proposition follows, since, as discussed in Remark~\ref{rem:pyramids}, all of the new facets introduced by a conical facet refinement are pyramids.
\end{proof}

\noindent We leave the problem of classifying triangulations with vanishing local $h$-polynomial in dimensions 4 and higher open, for future research.

\bibliography{math}
\bibliographystyle{amsalpha}

\end{document}